\def\pdt2{\partial_t^2}
\def\pdx2{\partial_x^2}
\newcommand{\normmm}[1]{{\left\vert\kern-0.25ex\left\vert\kern-0.25ex\left\vert #1
    \right\vert\kern-0.25ex\right\vert\kern-0.25ex\right\vert}}
\newcommand{\abs}[1]{\left\vert#1\right\vert}
\def\RR{{\mathbb{R}}}
\def\bigo{{\mathcal O}}
\def\d{{\mathrm d}}
\def\e{{\mathrm e}}
\def\iu{\mathrm{i}}
\def\eps{\varepsilon}
\DeclareMathOperator{\sinc}{sinc}
\newtheorem{theo}{Theorem}[section]
\newtheorem{lem}[theo]{Lemma}
\newtheorem{rem}[theo]{Remark}
\newtheorem{prop}[theo]{Proposition}
\newtheorem{algorithm}[theo]{Algorithm}
\numberwithin{equation}{section}
\title[Error estimates for charged-particle
dynamics]{Error estimates of some splitting schemes for charged-particle
dynamics under strong magnetic field}
\author[B. Wang]{Bin Wang}\address{\hspace*{-12pt}B.~Wang: School of Mathematics and Statistics, Xi'an Jiaotong University, 710049 Xi'an, China}
\email{wangbinmaths@xjtu.edu.cn}\urladdr{http://gr.xjtu.edu.cn/web/wangbinmaths/home}
\author[X. Zhao]{Xiaofei Zhao}
\address{\hspace*{-12pt}X.~Zhao: School of Mathematics and Statistics \& Computational Sciences Hubei Key Laboratory, Wuhan University, 430072 Wuhan, China}
\email{matzhxf@whu.edu.cn}\urladdr{http://jszy.whu.edu.cn/zhaoxiaofei/en/index.htm}
\begin{document}
\maketitle
\dedicatory{}

\begin{abstract}
In this work, we consider the error estimates of some splitting schemes for the charged-particle dynamics under a strong magnetic field. We first propose a novel energy-preserving splitting scheme with computational cost per step independent from the strength of the magnetic field. Then under the maximal ordering scaling case, we establish for the scheme and in fact for a class of Lie-Trotter type splitting schemes, a uniform (in the strength of the magnetic field) and optimal error bound in the position and in the velocity parallel to the magnetic field. For the general strong magnetic field case, the modulated Fourier expansions of the exact and the numerical solutions are constructed to obtain a convergence result. Numerical experiments are presented to illustrate the error and energy behaviour of the splitting schemes.
 \\ \\
{\bf Keywords:} Charged particle dynamics, Strong magnetic field, Splitting scheme, Energy-preserving, Error estimate, Modulated Fourier expansion. \\ \\
{\bf AMS Subject Classification:} 65L05, 65L20, 65L70, 65P10, 78A35, 78M25.
\end{abstract}

\section{Introduction}
The dynamics of charged particles in external electromagnetic field are of fundamental importance in plasma physics.
In this work, we are concerned with the numerical solution of the following charged-particle
dynamics (CPD) under a strong magnetic field \cite{Hairer2018,lubich19}
\begin{equation}\label{charged-particle sts}
\begin{split}
&\dot{x}(t)=v(t),\\
&\dot{v} (t)=v(t)\times \frac{B(x(t))}{\eps}+E(x(t)), \quad t>0,\\
&x(0)=x_0,\quad v(0)=v_0,
\end{split}
\end{equation}
where $x(t):[0,\infty)\to \RR^3$ and $v(t):[0,\infty)\to \RR^3$ are respectively the unknown position and velocity of the particle, $x_0$ and $v_0\in\RR^3$  are the given initial values, $E(x)=-\nabla U(x)$ is a given electric field generated by some scalar potential $U(x)$, $B(x)$ is a given magnetic field and  $\eps\in(0,1]$ is a dimensionless parameter inversely proportional to the strength of the magnetic field. Along the solution of \eqref{charged-particle
sts},  the energy or Hamiltonian $H(t)$ of the system
  \begin{equation}\label{H(x,v)}
H\left(x(t),v(t)\right):=\frac{1}{2}\abs{v(t)}^2+U(x(t))\equiv H\left(x(0),v(0)\right),\quad t\geq0,
\end{equation}
 is conserved.

The CPD has been studied for long times in the physical literature  \cite{Arnold97,Benettin94,Cary2009,add1,Northrop63}. The strong external magnetic field is introduced in important applications such as the magnetic fusion, where such magnetic field is essential for controlling the dynamics of plasma in the tokamak device for fusion. This has attracted many recent modeling and simulation works, and (\ref{charged-particle sts}) frequently occurs as  a core problem to solve after particle discretization of some kinetic models \cite{VP2, CPC,Zhao,VP3, VP4,VP5,VP-filbet,VP6,VP8,VP7,SonnendruckerBook}.

Along the numerical aspect for (\ref{charged-particle sts}), various schemes have been considered in the past decades. Earlier studies have been devoted to address the regime $\eps=1$ in (\ref{charged-particle sts}). Among them, the Boris method \cite{Boris1970} proposed in 1970 is still widely used by physicists, followed by some recent numerical analysis work \cite{Hairer2017-1,Qin2013} to address its mathematical property. Later on, many other structural-preserving   schemes have been designed, including the volume-preserving algorithm \cite{He2015}, the time-symmetric algorithm \cite{Hairer2017-2}, the symplectic or K-symplectic algorithms \cite{He2017,PRL1,Tao2016,Webb2014,Zhang2016}, the Poisson integrators \cite{Ostermann15} and the energy-preserving algorithms \cite{L. Brugnano2019,Li-ANM,Li-AML}.

Recent numerical efforts have been focused on the strong magnetic field regime of CPD, i.e. $0<\eps\ll1$ in (\ref{charged-particle sts}). In \cite{Hairer2018}, the long time near-conservation property of a variational integrator was analyzed for (\ref{charged-particle sts}) under $0<\eps\ll1$.
An exponential energy-preserving integrator was developed in \cite{Wang2020} for (\ref{charged-particle sts}) under a constant strong magnetic field $B$. A filtered Boris algorithm was formulated in \cite{lubich19} under the \emph{maximal ordering scaling} \cite{scaling1,scaling2}, i.e. $B=B(\eps x)$ in (\ref{charged-particle sts}) with $|B(0)|>0$ independent of $\eps$, which improves the asymptotic behaviour of the original Boris method as $\eps\to0$. At the kinetic level, in corporation with the Particle-in-Cell discretization, some more multiscale schemes have been proposed for (\ref{charged-particle sts}) including the asymptotic preserving schemes \cite{VP4,VP5} and the uniformly accurate schemes \cite{VP1,Zhao}. Although these powerful numerical methods have already been proposed,  error estimate results towards (\ref{charged-particle sts}) in the strong magnetic field regime are still limited in the literature to our best knowledge. In particular, even for some standard numerical methods, the optimal dependence of the error of on the step size and $\eps$ is not yet established rigorously. The very recent work \cite{VP9} has done the analysis for the IMEX finite difference scheme.

In this work, we consider the class of splitting type scheme which is undoubtedly one of the most popular classical methods \cite{Splitting} for (\ref{charged-particle sts}), and we aim to analyze its optimal convergence result. On one hand, we first propose a novel energy-preserving splitting scheme for solving the CPD (\ref{charged-particle sts}), where we combine the idea of the average vector field \cite{AVF} and splitting. The scheme exactly preserves the energy (\ref{H(x,v)}) at the discrete level for all times. More importantly, in the scheme the stiffness is not involved in the nonlinear equation thanks to splitting, and so the nonlinear solver can perform efficiently for all $\eps\in(0,1]$. In contrast, the other energy-preserving schemes
such as  the direct average vector field method \cite{AVF}, energy-preserving collocation methods \cite{Hairer000},  energy-conserving line integral methods \cite{L. Brugnano2019} and those from  \cite{Li-ANM,Li-AML} quickly lose efficiency as $\eps$ decreases because of the stiffness in the nonlinear equation.
 On the other hand, under the maximal ordering scaling case of (\ref{charged-particle sts}), we shall for the first time establish the rigorous optimal convergence result for a class of Lie-Trotter type splitting schemes including the proposed energy-preserving splitting and a volume-preserving splitting from the literature \cite{Zhao}. We prove by using the averaging technique \cite{Chartier}, that the schemes exhibit \emph{uniform first order} error bound in $x$ and $v_{\parallel}$ (the component of $v$ parallel to $B$) for $\eps\in(0,1]$,  
 which seems not true at the first glance of (\ref{charged-particle sts}) due to the $\bigo(1/\eps)$ commutator. 
 For the general strong magnetic field case of (\ref{charged-particle sts}), due to technical difficulty to obtain the stability of the scheme under standard energy approach, we turn to another powerful tool namely the modulated Fourier expansion \cite{ICM,Hairer00,Hairer16,hairer2006}. We shall construct the modulated Fourier expansions of the exact solution and the numerical solution, and then establish a convergence result of the scheme in $\eps$. Numerical results are presented in the end to underline the performance of the schemes. 

The rest of the paper is organized as follows. In section \ref{sec:method}, we propose the energy-preserving splitting scheme. In section \ref{sec:analysis1}, we give the optimal convergence result and the rigorous proof in the maximal ordering scaling case. In section \ref{sec:analysis2}, we carry out the modulated Fourier expansion in the general case and establish the convergence result. The numerical results are given in section \ref{sec:num} and the conclusion is  drawn in section \ref{sec:con}.

\section{Numerical methods}\label{sec:method}
In this section, we shall present the class of splitting schemes.
We shall denote $h=\Delta t>0$ as the time step and $t_n=nh$ for $n\in\mathbb{N}$.

Firstly, we introduce the energy-preserving schemes.
The schemes are based on the splitting of \eqref{charged-particle sts} into two following subflows:
\begin{equation}\label{charged-sts-first order}
 \frac{d}{dt }\begin{pmatrix}
    x \\
    v \\
\end{pmatrix}= \begin{pmatrix}
    0 \\
    \frac{1}{\eps}v\times B(x) \\
  \end{pmatrix}
,\qquad  \frac{d}{dt }\begin{pmatrix}
    x \\
    v \\
  \end{pmatrix}
= \begin{pmatrix}
    v \\
    E(x) \\
  \end{pmatrix}.
\end{equation}
For the
first flow, since $x(t)\equiv const$, we have the exact integration for $v$ and so we get the exact propagator
\begin{equation}\label{LM}\Phi^{L}_{t}:\ \ \left(
  \begin{array}{c}
    x(t) \\
    v(t) \\
  \end{array}
\right)=\left(
  \begin{array}{c}
    x(0) \\
    \e^{\frac{t}{\eps}\widehat B (x(0))}v(0) \\
  \end{array}
\right),\quad t\geq0,
\end{equation}
where the skew symmetric matrix $\widehat B $ is given by
$$\widehat B (x)= \begin{pmatrix}
                     0 & b_3(x) & -b_2(x) \\
                     -b_3(x) & 0 & b_1(x) \\
                     b_2(x) & -b_1(x) & 0 \\
                  \end{pmatrix}
$$
with the magnetic field $B=(b_1,b_2,b_3)^\intercal\in \RR^3$. By the Rodrigues type formula \cite{Zhao,lubich19,VP7}, the matrix exponential function $\e^{t\widehat{B}}$ can be efficiently implemented in practice.

The second flow in the splitting (\ref{charged-sts-first order}) is nonlinear, and so we look for approximations. Note it is a canonical Hamiltonian system: $\dot{q}(t)=J^{-1}\nabla H(q(t))$ with $J$ the symplectic matrix, so in order to get the exact energy-preserving property, we adopt
the average vector field (AVF) formula \cite{AVF} which by denoting $q^n\approx q(t_n)$ is defined as
\begin{equation}
 q^{n+1}=q^n+h\int_{0}^{1}J^{-1}\nabla H\left((1-\rho)q^{n}+\rho
 q^{n+1}\right)d\rho,
\label{AVF}%
\end{equation}
and we end up with the following energy-preserving splitting methods.

\begin{algorithm}[Energy-preserving splitting method]
\label{alg:EPS}
For the second flow in (\ref{charged-sts-first order}), we apply the AVF
method \eqref{AVF} to get the  approximated propagator $\Phi^{NL}_{t}$, which reads \begin{equation}\label{NLM}\Phi^{NL}_{t}:\ \
\begin{pmatrix}
    x(t) \\
    v(t) \\
  \end{pmatrix}
=\begin{pmatrix}
    x(0)+tv(0)+\frac{t^2}{2}\int_{0}^1
E\left(\rho x(0)+(1-\rho)x(t)\right) d\rho \\
    v(0)+t\int_{0}^1
E\left(\rho x(0)+(1-\rho)x(t)\right) d\rho  \\
  \end{pmatrix}.
\end{equation}
Then the full scheme can be obtained through composition. For example, by denoting
the numerical solution $x^n\approx x(t_n),\, v^n\approx v(t_n)$ and choosing $x^0=x_0,\,v^0=v_0$,
 the
 Lie-Trotter splitting scheme
\begin{equation*}\Phi_{h}=\Phi^{NL}_{h}\circ\Phi^{L}_{h},\end{equation*}
for solving (\ref{charged-particle sts}) in total reads for $n\geq0$,
\begin{equation}\label{TSM0}
\left\{\begin{split}x^{n+1}=&x^{n}+
h\e^{\frac{h}{\eps}\widehat B (x^n)}v^{n}+\frac{h^2}{2}\int_{0}^1
E\left(\rho x^{n}+(1-\rho)x^{n+1}\right) d\rho,\\
v^{n+1}=&\e^{\frac{h}{\eps}\widehat B (x^n)}v^{n}+h\int_{0}^1
E\left(\rho x^{n}+(1-\rho)x^{n+1}\right) d\rho.
\end{split}\right.
\end{equation}
We shall refer to this algorithm by S1-AVF.
\end{algorithm}

It is noted that Algorithm \ref{alg:EPS} is implicit, while the nonlinear equation (\ref{NLM}) is independent of $\eps$. Therefore, compared with other implicit energy-preserving schemes \cite{L. Brugnano2019,Hairer000,Li-ANM,Li-AML, AVF} for solving CPD (\ref{charged-particle sts}), the computational cost of S1-AVF per time step is uniform in $\eps\in(0,1]$. To obtain an explicit scheme, we  consider the following approximation.

\begin{algorithm}[Explicit splitting method]
\label{alg:ES} For the second flow in (\ref{charged-sts-first order}), we linearize (\ref{NLM}) and now $\Phi^{NL}_{h}$ is
given by
$$\Phi^{NL}_{t}:\ \ \begin{pmatrix}
    x(t) \\
    v(t) \end{pmatrix}
 =\begin{pmatrix}
    x(t)+tv(t)+\frac{t^2}{2}E(x(0)) \\
    v(t)+\frac{h}{2}\left[E(x(0))+E(x(t))\right]
  \end{pmatrix}.
$$
With the same  $\Phi^{L}_{h}$ defined by \eqref{LM}, the Lie-Trotter splitting  yields the scheme: for $n\geq0$,
\begin{equation}\label{TSM1}
\left\{\begin{split}x^{n+1}=&x^{n}+
h\e^{\frac{h}{\eps}\widehat B (x^n)}v^{n}+\frac{h^2}{2}E(x^{n}),\\
v^{n+1}=&\e^{\frac{h}{\eps}\widehat B (x^n)}v^{n}+\frac{h}{2}\left[E(x^{n})+E(x^{n+1})\right],
\end{split}\right.
\end{equation}
for solving (\ref{charged-particle sts}), and we shall refer to it as S1-SV.
\end{algorithm}

For the above two presented algorithms, their energy conservation properties are stated as follows.

\begin{prop}
The Algorithm \ref{alg:EPS}  exactly preserves  the energy \eqref{H(x,v)} at the discrete level, i.e. for $n\in\mathbb{N}$, $H(x^{n},v^{n})\equiv H(x^{0},v^{0}). $
\end{prop}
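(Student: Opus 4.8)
The plan is to establish energy conservation for each of the two half-steps composing $\Phi_h = \Phi^{NL}_h\circ\Phi^L_h$ separately, and then to pass to the full scheme by induction on $n$. Since one step of S1-AVF is the composition of the exact linear propagator $\Phi^L_h$ of the first subflow with the AVF propagator $\Phi^{NL}_h$ of the second, it suffices to show that each of these two maps leaves $H$ in \eqref{H(x,v)} invariant.

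First I would dispose of $\Phi^L_h$. Along the first subflow in \eqref{charged-sts-first order} the position is frozen, so $U$ is unchanged, while the velocity is multiplied by $\e^{\frac{h}{\eps}\widehat B(x^n)}$. Because $\widehat B$ is skew-symmetric this matrix is orthogonal, whence $\abs{v}$ and therefore the kinetic term $\frac12\abs{v}^2$ are preserved. Thus $H\circ\Phi^L_h = H$ exactly.

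The heart of the argument is $\Phi^{NL}_h$. I would first observe that the second subflow is the canonical Hamiltonian system $\dot q = J^{-1}\nabla H(q)$ with $q=(x,v)^\intercal$ and $J$ the symplectic matrix, and that \eqref{NLM} is precisely the AVF update \eqref{AVF} applied to it, the position equation in \eqref{NLM} being obtained by substituting the velocity equation. The key computation is then the standard AVF cancellation: with $\xi_\rho = (1-\rho)q^n + \rho q^{n+1}$, the fundamental theorem of calculus gives
\[
H(q^{n+1}) - H(q^n) = \Big(\int_0^1\nabla H(\xi_\rho)\,d\rho\Big)^{\!\intercal}(q^{n+1}-q^n),
\]
and inserting $q^{n+1}-q^n = h\,J^{-1}\int_0^1\nabla H(\xi_\sigma)\,d\sigma$ turns the right-hand side into $h\,\bar g^\intercal J^{-1}\bar g$, where $\bar g=\int_0^1\nabla H(\xi_\rho)\,d\rho$. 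Since $J^{-1}$ is skew-symmetric, this scalar equals its own transpose and hence its own negative, so it vanishes; therefore $H\circ\Phi^{NL}_h = H$ exactly.

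Combining the two half-steps yields $H(x^{n+1},v^{n+1}) = H(x^n,v^n)$ for every $n$, and a one-line induction starting from $(x^0,v^0)=(x_0,v_0)$ gives the claim for all $n\in\mathbb{N}$. I expect no substantial analytic obstacle here: the first flow is trivially conservative and the skew-symmetry cancellation is automatic. The only point genuinely demanding care is the identification of the propagator \eqref{NLM} with a faithful AVF discretization of the Hamiltonian second flow, so that the cancellation argument applies verbatim; once that identification is made, the result follows.
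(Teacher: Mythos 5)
Your proposal is correct and follows essentially the same route as the paper: both split one step into $\Phi^L_h$ (which preserves $H$ because the position is frozen and $\e^{\frac{h}{\eps}\widehat B}$ is orthogonal by skew-symmetry of $\widehat B$) and $\Phi^{NL}_h$ (which preserves $H$ because it is the AVF discretization of the Hamiltonian second subflow), then conclude by composition. The only difference is cosmetic: you carry out the standard AVF cancellation $h\,\bar g^\intercal J^{-1}\bar g=0$ explicitly, whereas the paper cites this energy-conservation property of the AVF formula from the literature.
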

\begin{proof}
Denote in S1-AVF (\ref{TSM0}) $$ \begin{pmatrix}
    x^{L} \\
    v^{L}
  \end{pmatrix} = \Phi^{L}_{h}  \begin{pmatrix}
    x^{0} \\
    v^{0}
  \end{pmatrix},\qquad  \begin{pmatrix}
    x^{1} \\
    v^{1}
  \end{pmatrix}= \Phi^{NL}_{h}  \begin{pmatrix}
    x^{L} \\
    v^{L}
  \end{pmatrix}.$$
 Firstly, since $\widehat B (x)$ is skew symmetric, the propagator  $\Phi^{L}_h$ exactly preserves the energy $\frac{1}{2}\abs{v}^2$, i.e.
$\frac{1}{2}\abs{v^{L}}^2=\frac{1}{2}\abs{v^0}^2$, and $x^L=x^0$.
On the other hand for $\Phi^{NL}_h$, it is clearly that $
\frac{d}{dt }\begin{pmatrix}
    x \\
    v
  \end{pmatrix}= \begin{pmatrix}
    v \\
   E(x)
  \end{pmatrix}
$ is a Hamiltonian system
with energy
$ \tilde{H}(x,v)=\frac{1}{2}\abs{v}^2+U(x)$.
 Concerning the energy conservation of AVF formula (\ref{AVF}) for such flow, which was established in \cite{Quispel2008}, we obtain in $\Phi^{NL}_h$
$$\frac{1}{2}\abs{v^{1}}^2+U\left(x^{1}\right)=\frac{1}{2}\abs{v^{L}}^2+U\left(x^{L}\right).$$
On the basis of  these results, we have
\begin{equation*}
\begin{aligned}
&H\left(x^{1},v^{1}\right)=\frac{1}{2}\abs{v^{L}}^2+U\left(x^{L}\right)
=\frac{1}{2}\abs{v^{0}}^2+U(x^{0})=H\left(x^{0},v^{0}\right),
\end{aligned}
\end{equation*}
which shows the result for S1-AVF.

By the above fact, the energy conservation of Algorithm \ref{alg:EPS} is straightforward through arbitrary composition.
\end{proof}

It is clear from above that one can switch to other energy-preserving techniques for approximating the nonlinear flow to define $\Phi^{NL}_{h}$, and the algorithm \ref{alg:EPS} is still energy-preserving. A direct result is that when electric field $E(x)$ in (\ref{charged-particle sts}) is constant in space, then we have the preserving property in the explicit scheme.

\begin{prop}\label{epsv} The Algorithm \ref{alg:ES} preserves the energy (\ref{H(x,v)}) if the external electric field in the CPD (\ref{charged-particle sts}) is a constant field.
\end{prop}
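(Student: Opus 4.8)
The plan is to follow the structure of the proof of the preceding proposition, reducing the claim to the fact that each factor in the composition defining Algorithm \ref{alg:ES} conserves the energy separately when $E$ is constant. Write $E(x)\equiv E_0$ for a fixed $E_0\in\RR^3$. Since $E=-\nabla U$, the potential is then the linear function $U(x)=-E_0\cdot x$ (up to an irrelevant additive constant), so the conserved quantity (\ref{H(x,v)}) reads $H(x,v)=\half\abs{v}^2-E_0\cdot x$.

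For the linear propagator $\Phi^L_h$ the argument is identical to before: as $\widehat B(x^n)$ is skew-symmetric, $\e^{(h/\eps)\widehat B(x^n)}$ is orthogonal, so $\Phi^L_h$ preserves $\abs{v}$ and leaves $x$ fixed, hence conserves $H$. The essential point is therefore to analyze the linearized nonlinear propagator $\Phi^{NL}_h$ of Algorithm \ref{alg:ES} in the constant-field case. I claim that here $\Phi^{NL}_h$ coincides with the \emph{exact} flow of the second subsystem in (\ref{charged-sts-first order}). Indeed, with $E\equiv E_0$ the averaged force $\half\big[E(x(0))+E(x(t))\big]$ collapses to the constant $E_0$, and the resulting updates $x\mapsto x+tv+\tfrac{t^2}{2}E_0$ and $v\mapsto v+tE_0$ are precisely the exact solution of the Hamiltonian subsystem $\dot x=v,\ \dot v=E_0$, whose energy is $\tilde H(x,v)=\half\abs{v}^2+U(x)=H(x,v)$. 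Consequently $\Phi^{NL}_h$ conserves $H$ exactly, and composing the two energy-preserving maps gives $H(x^{n+1},v^{n+1})=H(x^n,v^n)$ for all $n$.

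The only step needing genuine verification --- and the one I expect to be the main (if modest) obstacle --- is confirming that the linearization underlying Algorithm \ref{alg:ES} introduces no discretization defect when $E$ is constant, i.e.\ that the averaged evaluation of the force reproduces the exact quadratic-in-$t$ position update and the linear-in-$t$ velocity update; once this is checked the energy conservation is immediate. Should one prefer to avoid the flow interpretation, the identity can be verified by a direct computation: setting $w^n=\e^{(h/\eps)\widehat B(x^n)}v^n$ so that $\abs{w^n}=\abs{v^n}$, one has $v^{n+1}=w^n+hE_0$ and $x^{n+1}=x^n+hw^n+\tfrac{h^2}{2}E_0$, and expanding $\half\abs{v^{n+1}}^2-E_0\cdot x^{n+1}$ shows that the terms $h\,w^n\!\cdot\!E_0$ and $\tfrac{h^2}{2}\abs{E_0}^2$ cancel against the corresponding contributions from $-E_0\cdot x^{n+1}$, leaving exactly $\half\abs{w^n}^2-E_0\cdot x^n=\half\abs{v^n}^2-E_0\cdot x^n=H(x^n,v^n)$.
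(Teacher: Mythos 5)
Your proof is correct and follows essentially the route the paper intends: the paper states this proposition as a direct consequence of the preceding discussion (with no separate proof), the point being exactly that for constant $E$ the linearized propagator $\Phi^{NL}_h$ coincides with the exact, energy-conserving flow of the second subsystem, so the composition with the norm-preserving rotation $\Phi^L_h$ conserves $H$. Your closing direct computation confirms the same cancellation and is a valid self-contained alternative.
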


Note the presented way of splitting (\ref{charged-sts-first order}) is different from the one in the literature \cite{Zhao}:
\begin{equation}\label{charged-sts-first order-v}
\frac{d}{dt }\begin{pmatrix}
    x \\
    v
  \end{pmatrix} =\begin{pmatrix}
    v \\
   0
  \end{pmatrix},\qquad \frac{d}{dt }\begin{pmatrix}
    x \\
    v
  \end{pmatrix}= \begin{pmatrix}
    0 \\
    \frac{1}{\eps}v\times B(x)+ E(x)
  \end{pmatrix}.
\end{equation}
where both subflows have exact integrators, and it in combine leads to the following volume-preserving algorithm.
\begin{algorithm}[Volume-preserving splitting method]
\label{alg:VPS} By integrating (\ref{charged-sts-first order-v}) exactly, the Lie-Trotter splitting method for solving (\ref{charged-particle sts}) reads
\begin{equation}\label{vps1}
\left\{\begin{split}x^{n+1}=&x^{n}+
hv^{n+1},\\
v^{n+1}=&\e^{\frac{h}{\eps}\widehat B (x^n)}v^{n}+h\varphi_1\left(\frac{h}{\eps}\widehat B (x^n)\right) E(x^{n}),
\end{split}\right.
\end{equation}
with $\varphi_1(z)=(\e^z-1)/z$ and we  denote it by S1-VP.
\end{algorithm}

The presented three splitting algorithms, i.e. (\ref{TSM0}), (\ref{TSM1}) and (\ref{vps1}) look rather close. In particular, they share the same `linear' part which plays the key role in coming analysis. The main observation of the paper is that all of them show uniform error bound $\bigo(h)$ in the position $x(t)$ and in one component of the velocity $v(t)$ when $h$ is small. This will be illustrated by numerical experiments in section \ref{sec:num}. Such convergence result  seems surprising at the first glance of (\ref{charged-particle sts}), since usually the error of splitting scheme is determined by the commutator which is $\bigo(1/\eps)$ here.  For higher order compositions such as Strang splitting, such uniform error bound is gone. Therefore, in this paper we focus on the three Lie-Trotter type schemes and aim to understand their uniform error bound. The next two sections are devoted to the rigorous error analysis.

\section{Optimal convergence in maximal ordering case}\label{sec:analysis1}
In this section, we give the convergence result of the presented splitting schemes. To get rigorous optimal error estimates, we restrict ourself to first consider the so-called \emph{maximal ordering scaling} \cite{scaling1,lubich19,scaling2} of the CPD (\ref{charged-particle sts}) here, i.e.
\begin{equation}\label{model0}
\dot{x}= v,\quad
\dot{v}=\frac{1}{\eps}v\times B(\eps x)+E(x),\quad 0<t\leq T,
\end{equation}
where the magnetic field $B(\eps x)$ satisfies the condition
$|B(0)|>0$ independent of $\eps$. For simplicity of notations, we shall denote $A\lesssim B$ for $A\leq CB$ where $C>0$ is a generic constant independent of $h$ or $n$ or $\eps$, and we shall denote $t_s^n$ as some intermediate time value which  may vary line by line in the proof.

\subsection{Main result}

In order to establish the optimal error bounds (with optimal dependence of the $\eps$) of the proposed scheme for solving (\ref{model0}) until a finite time $T>0$ which is independent of $\eps$, we follow the strategy from \cite{Chartier} by introducing the time re-scaling
$t\to t\eps$ which
equivalently formulates (\ref{charged-particle sts}) into a long-time problem
\begin{equation}\label{model}\left\{
\begin{split}
&\dot{x}=\eps v,\quad\dot{v}=v\times B(\eps x)+\eps E(x),\quad 0<t\leq \frac{T}{\eps},\\
&x(0)=x_0,\quad v(0)=v_0.
\end{split}\right.
\end{equation}
Under the assumption that $B(x), E(x)\in C^1(\RR^3)$, 
for (\ref{model}) it is clear to have
\begin{equation}\label{regularity}\|x\|_{L^\infty(0,T/\eps)}+\|v\|_{L^\infty(0,T/\eps)}\lesssim 1.
\end{equation}
As another matter of fact, the propagator $\e^{t\widehat{B}(0)}$ generates a periodic flow thanks to the skew-symmetry of $\widehat{B}$, and we shall denote $T_0>0$ as the single  period of it.
The splitting scheme (\ref{TSM0}) under the long-time scaling for solving (\ref{model}) consequently reads
\begin{equation}\label{scheme}
\left\{\begin{split}
x^{n+1}=&x^{n}+
\eps h \e^{h\widehat{B}(\eps x^n)}v^{n}+\frac{h^2\eps^2}{2}\int_{0}^1
E\big(\rho x^{n}+(1-\rho)x^{n+1}\big) d\rho,\quad 0\leq n<\frac{T}{\eps},\\
v^{n+1}=&\e^{h\widehat{B}(\eps x^n)}v^{n}+h\eps\int_{0}^1
E\big(\rho x^{n}+(1-\rho)x^{n+1}\big) d\rho.
\end{split}\right.
\end{equation}
To state the theorem, we introduce the parallel component of the  velocity to the magnetic field
$$
v_\parallel(t) := \frac{B(\eps x(t))}{|B( \eps x(t))|}\, \left(  \frac{B(\eps x(t))}{|B(\eps x(t))|}\cdot v(t) \right),\quad t\geq0, 
$$
 and similarly for the numerical velocity as
$$
v_\parallel^n := \frac{B(\eps x^n)}{|B(\eps x^n)|}\, \left(  \frac{B(\eps x^n)}{|B(\eps x^n)|}\cdot v^n \right), \quad n\geq0.
$$

The main convergence result of the splitting scheme is stated as follows.
\begin{theo}\label{Convergence2}(Optimal global convergence) Under the condition that $B(x),E(x)\in C^1(\RR^3)$, let $x^n,\,v^n$ be the numerical solution from the S1-AVF (\ref{scheme}) for solving (\ref{model}) up to $T/\eps$ for some fixed $T>0$, then there exists a constant $N_0>0$ independent of $\eps$, such that when the time step $h=\frac{T_0}{N}$ with some integer $N\geq N_0$, we have the following error bound
\begin{equation}\label{main} \abs{x^{n} -x(t_n)}\lesssim \eps h+N^{-m_0}, \quad
\abs{v_{\parallel}^{n} -v_\parallel(t_n)}\lesssim  \eps h+N^{-m_0},\quad 0\leq n\leq \frac{T}{\eps},
\end{equation}
for some $m_0>0$ arbitrarily large. 
\end{theo}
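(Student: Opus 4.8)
The plan is to reduce the long-time error analysis of the scheme \eqref{scheme} to a comparison of slowly-varying \emph{averaged} quantities, following the two-scale averaging strategy of \cite{Chartier}. The starting point is the observation that, in the rescaled system \eqref{model}, the fast dynamics is the $\bigo(1)$ gyration generated by $\widehat{B}(\eps x)$, while the genuine drift of $x$ and of $v$ enters only at order $\eps$. Since $\widehat{B}(\eps x)v_\parallel=0$, the rotation axis direction carrying $v_\parallel$ and the position $x$ are the \emph{slow} variables, whereas the perpendicular velocity $v_\perp$ rotates rapidly; this is precisely why one can hope for a uniform bound on $x$ and $v_\parallel$ but not on the full $v$. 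Accordingly, I would first record, using \eqref{regularity}, that $x(s),v(s)$ stay in a fixed compact set for $0\le s\le T/\eps$, so that $B,E$ and finitely many of their derivatives are bounded uniformly in $\eps$ along both the exact and, once an a priori bound is in hand, the numerical trajectories.

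Second, I would construct the averaged form of the exact solution. Introducing the fast phase associated with $\e^{s\widehat{B}(\eps x)}$ together with a near-identity, phase-periodic change of variables, the system \eqref{model} is transformed into an autonomous averaged system $\dot{\bar y}=\eps\bar F_1(\bar y)+\eps^2\bar F_2(\bar y)+\cdots$ for the slow variables, plus a purely oscillatory correction and a remainder of arbitrarily high order in $\eps$ after sufficiently many averaging steps. The key qualitative outputs are that $x(s)$ and $v_\parallel(s)$ coincide with their averaged counterparts up to $\bigo(\eps)$ oscillatory terms, and that the averaged vector field is smooth and $\bigo(\eps)$ in size, so $\bar y$ varies by $\bigo(1)$ over the whole interval $[0,T/\eps]$ with uniformly bounded derivatives.

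Third---and this is the heart---I would carry out the analogous \emph{discrete} averaging for the scheme \eqref{scheme}. Because the fast rotation is integrated exactly through $\e^{h\widehat{B}(\eps x^n)}$ and the AVF drift contributes only $\bigo(\eps h)$, one step of \eqref{scheme} advances the slow variables by $\eps h$ times a rotated velocity/electric-field term. Viewing the scheme stroboscopically over one period, i.e.\ over the $N=T_0/h$ micro-steps, the oscillatory contributions to $x$ and $v_\parallel$ assemble into a Riemann sum of a phase-periodic integrand; with the resonant choice $h=T_0/N$ these sums cancel to leading order, leaving a discrete averaged update that agrees with the time-$T_0$ map of the continuous averaged flow up to a local error $\bigo(\eps h^2)$ and a quadrature-type residual $\bigo(N^{-m_0})$ reflecting the super-convergence of the equispaced rule on a periodic integrand. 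A Lie-bracket computation shows the Lie--Trotter commutator $[F_L,F_{NL}]$ is only $\bigo(\eps)$ here, and moreover that its position component $-\eps\,\widehat{B}(\eps x)v=-\eps\,\widehat{B}(\eps x)v_\perp$ is oscillatory (as $\widehat{B}v_\parallel=0$), so that its accumulation over a period is damped by the same cancellation.

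Finally, I would close the argument by a discrete Gronwall / Lady Windermere's fan propagation over the $\bigo(1/\eps)$ periods contained in $[0,T/\eps]$: the non-oscillatory part of the per-step local error is $\bigo(\eps^2h^2)$ and sums over the $T/(\eps h)$ steps to $\bigo(\eps h)$, while the oscillatory part telescopes across periods and contributes only $\bigo(\eps h^2)+\bigo(N^{-m_0})$; stability is inherited from the boundedness and Lipschitz continuity of the averaged vector fields. Combining the exact-solution averaging, the numerical averaging and this propagation yields \eqref{main}. The main obstacle I anticipate is exactly the discrete-averaging step: the gyrofrequency $|B(\eps x)|$ drifts with the slowly-moving position, so $h=T_0/N$ is resonant with the reference period $2\pi/|B(0)|$ but \emph{not} with the local period, and one must show that the resulting $\bigo(\eps)$ phase mismatch does not spoil the per-period cancellation uniformly in $\eps$. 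Controlling this, together with establishing the a priori numerical bound needed to freeze $B,E$ on a fixed compact set, is where the real work lies.
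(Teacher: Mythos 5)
Your strategy is sound and, at the level of mechanism, coincides with the paper's: exact integration of the fast rotation, stroboscopic summation over exactly one period enforced by $h=T_0/N$, spectral accuracy of the equispaced quadrature of a periodic integrand producing the $N^{-m_0}$ term, identification of $x$ and $v_\parallel$ as the only slow quantities, and a two-level (within-period, then across-periods) Gronwall argument. The packaging differs, however: you propose to build genuine averaged systems for both the exact and numerical solutions via near-identity changes of variables, whereas the paper never constructs an averaged equation. It works directly on the raw error recursion for \eqref{scheme}, first proving a coarse $\bigo(h)$ error and boundedness result (Lemma \ref{Convergence}) by a standard Gronwall argument, and then upgrading it using two explicit algebraic identities that your sketch only gestures at: (i) $\int_0^{T_0}\e^{sB_0}\,ds = T_0\,\tilde B_0\tilde B_0^{\intercal}$ with $B_0=\widehat B(0)$, so that over one period only the \emph{parallel} component of the velocity error feeds the position error; and (ii) the local velocity error is a difference of rotated and unrotated averages of $E$, hence exactly orthogonal to $B(\eps x(t_n))$, so the parallel velocity error has no $\bigo(\eps h^2)$ source term at all. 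These two facts are what actually close the loop to $\eps h+N^{-m_0}$, and any executed version of your plan would have to reproduce them in averaged coordinates. Finally, the obstacle you single out as the real work --- the mismatch between the reference period $T_0$ and the local gyroperiod --- is dispatched almost trivially in the maximal ordering scaling: since $\|x\|_{L^\infty(0,T/\eps)}\lesssim 1$, one has $B(\eps x(t))=B(0)+\bigo(\eps)$ uniformly on $[0,T/\eps]$, so replacing every rotation by the frozen $\e^{hB_0}$ costs $\bigo(\eps h^2)$ per step in the velocity recursion, which after the extra factor $h\eps$ in the position update accumulates to only $\bigo(\eps^2 h)$ per period and $\bigo(\eps h)$ overall; for the parallel velocity one does not freeze at all but uses that the true rotation preserves the true parallel component exactly. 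So your plan is viable, but the heavy averaging machinery is unnecessary here, and the two deferred steps (the a priori bound and the per-period cancellation identities) are precisely where the paper spends its effort.
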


The convergence theorems of the other two splitting schemes S1-SV (\ref{TSM1}) and S1-VP (\ref{vps1}) are totally the same as S1-AVF in Theorem \ref{Convergence2} with little modifications in the proof, and so they will be omitted here for simplicity.
Before we step into the proof, we give some important remarks.
\begin{rem}\label{rk: step}
The time step $h=T_0/N$ with some integer $N$ in Theorem \ref{Convergence2} is a technique condition for rigorous proof, which also appeared in \cite{Chartier}. In practice, one only needs $h\lesssim1$ for solving the scaled problem (\ref{model}) to observe the proved optimal error bound as we shall see later in section \ref{sec:num}.
\end{rem}

\begin{rem}
The convergence result of the proposed splitting scheme S1-AVF (\ref{TSM0}) in the original scaling (\ref{model0}) reads equivalently as
$|x^n-x(t_n)|\lesssim h+N^{-m_0}/\eps,\ |v_{\parallel}^{n} -v_\parallel(t_n)|\lesssim h+N^{-m_0}/\eps,$ when $h\lesssim \eps$. Since $N^{-m_0}$ quickly reaches machine accuracy as $N$ increases, so what shows up in practical computing is the uniform part of the error $\bigo(h)$.
\end{rem}

\subsection{Proof of the theorem}

To prove the theorem, we begin by firstly obtaining a coarse estimate for the boundedness of the numerical solution.

\begin{lem}\label{Convergence}Under the condition that $B(x),E(x)\in C^1(\RR^3)$, let $x^n,\,v^n$ be the numerical solution from the S1-AVF (\ref{scheme}) for solving (\ref{model}) up to $T/\eps$ for some fixed $T>0$, then there exists a constant $h_0>0$ independent of $\eps$, such that when the time step $0<h\leq h_0$, we have
$$ \abs{x^{n} -x(t_n)}\lesssim  h, \quad
\abs{v^{n} -v(t_n)}\lesssim  h,\quad 0\leq n\leq T/\eps,$$
and
\begin{equation}\label{bounded}
|x^n|\leq \|x\|_{L^\infty(0,T/\eps)}+1,\quad |v^n|\leq \|v\|_{L^\infty(0,T/\eps)}+1,\quad
0\leq n\leq T/\eps.
\end{equation}
\end{lem}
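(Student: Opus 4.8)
The plan is to run a standard local-error-plus-stability (Lady Windermere's fan) argument for the one-step map $\Phi_h=\Phi^{NL}_h\circ\Phi^L_h$ defined by \eqref{scheme}, but to track carefully the powers of $\eps$ at every stage. Writing $e^n:=\abs{x^n-x(t_n)}+\abs{v^n-v(t_n)}$ and letting $\phi_h$ denote the exact flow of \eqref{model} over one step, the two ingredients will be a local error estimate $\abs{\Phi_h\big(x(t_n),v(t_n)\big)-\phi_h\big(x(t_n),v(t_n)\big)}\lesssim\eps h^2$ and a discrete stability estimate with Lipschitz constant $1+C\eps h$. The decisive point --- and the reason the bound is \emph{uniform} in $\eps$ up to the long time $T/\eps$ --- is that \emph{both} the local defect and the stability defect carry an explicit factor of $\eps$, so that over the $n\le T/(\eps h)$ steps the accumulated error is $\sim n\cdot\eps h^2=\bigo(h)$ while the amplification factor is $(1+C\eps h)^n\le\e^{C\eps h n}\le\e^{CT}=\bigo(1)$. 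I would first note that the implicit equation for $x^{n+1}$ in \eqref{scheme} has prefactor $\eps^2h^2/2$ in front of the integral term, so the fixed-point map is a contraction for $h\le h_0$, giving existence, uniqueness and smooth dependence of the numerical step.

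For the local error I would Taylor-expand, over a single step started from the exact values $(x(t_n),v(t_n))$, both the exact solution of \eqref{model} and the numerical map \eqref{scheme}. Using $\dot x=\eps v$ and $\dot v=\widehat B(\eps x)v+\eps E$ one finds $x(t_{n+1})=x+\eps h v+\tfrac{h^2}{2}\eps\widehat B v+\bigo(\eps^2h^2)+\tfrac{\eps h^3}{6}\widehat B^2v+\cdots$ and $v(t_{n+1})=v+h\widehat B v+\tfrac{h^2}{2}\widehat B^2 v+\eps hE+\bigo(\eps h^2)$, while expanding $\e^{h\widehat B(\eps x)}=I+h\widehat B+\tfrac{h^2}{2}\widehat B^2+\cdots$ in \eqref{scheme} reproduces exactly the rotation-generated terms $v+h\widehat Bv+\tfrac{h^2}{2}\widehat B^2v$ together with the leading $\eps hE$. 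The mismatches survive only in the slow, $\bigo(\eps)$ part of the dynamics (the electric field $\eps E$ and the spatial variation of $B$ through $\eps x$): concretely the $h^2$-coefficient of $x$ differs by $\tfrac{\eps h^2}{2}\widehat B v$, which is nothing but the Lie--Trotter commutator $[f_1,f_2]=\bigo(\eps)$ of the two split vector fields. This gives the local error $\bigo(\eps h^2)$ in both $x$ and $v$; the AVF quadrature \eqref{AVF} applied to the slow flow contributes only a higher-order $\bigo(\eps^2h^3)$.

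For stability I would bound the Lipschitz constant of $\Phi_h$ on the set where $\abs{x},\abs{v}$ are bounded. The essential observation is that $\e^{h\widehat B(\eps x)}$ is orthogonal, so the velocity update is \emph{non-expansive} in the $v$-direction; its dependence on $x$ enters only through $\eps x$, hence $\norm{\e^{h\widehat B(\eps x^n)}-\e^{h\widehat B(\eps\tilde x^n)}}\lesssim\eps h\abs{x^n-\tilde x^n}$, and the remaining electric-field terms carry an explicit $\eps$. The position update changes $x$ by $\eps h\,\e^{h\widehat B}v$ plus an $\bigo(\eps^2h^2)$ implicit correction, so it too is Lipschitz with constant $1+C\eps h$. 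Collecting the two components yields $\abs{\Phi_h(x^n,v^n)-\Phi_h(\tilde x^n,\tilde v^n)}\le(1+C\eps h)\big(\abs{x^n-\tilde x^n}+\abs{v^n-\tilde v^n}\big)$. I emphasise that it is exactly this $\eps$ in front of $h$ that is indispensable: a stability defect of size $\bigo(h)$ would produce the useless factor $\e^{Cnh}=\e^{CT/\eps}$.

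Combining the two estimates gives $e^{n+1}\le(1+C\eps h)e^n+C'\eps h^2$ with $e^0=0$, and the discrete Gronwall inequality then yields $e^n\le\frac{C'}{C}h\big(\e^{C\eps h n}-1\big)\le C''h$ for all $n\le T/\eps$, which is the claimed $\bigo(h)$ bound; the boundedness \eqref{bounded} follows immediately by choosing $h_0$ so small that $C''h_0\le1$, since then $\abs{x^n}\le\norm{x}_{L^\infty(0,T/\eps)}+C''h\le\norm{x}_{L^\infty(0,T/\eps)}+1$ and likewise for $v^n$. The only delicate point in making this rigorous is that the constants in the local-error and Lipschitz estimates require the a priori boundedness \eqref{bounded}, so these must be propagated together as an induction hypothesis on $n$; the induction closes precisely because the resulting error is $\bigo(h)$ and therefore stays inside the ball of radius $1$ once $h\le h_0$. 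The main obstacle throughout is not the mechanics of the Gronwall argument but securing the factor of $\eps$ simultaneously in the local error and in the stability constant, which rests entirely on integrating the fast rotation $\e^{h\widehat B}$ exactly in $\Phi^L_h$.
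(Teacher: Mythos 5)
Your proposal is correct and follows essentially the same route as the paper: an $\bigo(\eps h^2)$ local error combined with a $(1+C\eps h)$ stability estimate (resting on the orthogonality of $\e^{h\widehat B}$ and on the fact that $B$ is evaluated at $\eps x$), a Gronwall argument over the $\bigo(1/(\eps h))$ steps, and an induction that propagates the a priori bound \eqref{bounded} together with the error. The one point to tighten is the local error in $v$: a finite Taylor truncation leaves a remainder such as $\tfrac{h^3}{6}\widehat B^3 v=\bigo(h^3)$ carrying no factor of $\eps$, so — as the paper does by introducing the frozen-field truncated system \eqref{model trun} and using the variation-of-constants formula — you must compare the exact solution with the flow of $\e^{s\widehat B(\eps x(t_n))}$ so that \emph{all} rotation-generated terms cancel exactly and only the $\bigo(\eps)$ slow part (the electric field and the $\bigo(\eps^2 h)$ intra-step drift of $B(\eps x(t))$) contributes to the defect.
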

\begin{proof}

\textbf{Linearized problem.}
First of all, for some $t=t_n+s$ with $n\geq0$, we consider a truncated system of (\ref{model}) as:
\begin{equation}\label{model trun}\left\{
\begin{split}
&\dot{\tilde{x}}^n(s)=\eps \tilde{v}^n(s),\quad 0\leq s\leq h,\\
&\dot{\tilde{v}}^n(s)=\tilde{v}^n(s)\times B(\eps x(t_n))+\eps E(\tilde{x}^n(s)),\\
&\tilde{x}^n(0)=x(t_n),\quad \tilde{v}^n(0)=v(t_n).
\end{split}\right.
\end{equation}
It is also direct to have for all $0\leq n<T/\eps$, there exists a uniform upper bound $C>0$ that depends on
$\|x\|_{L^\infty(0,T/\eps)}$, $\|v\|_{L^\infty(0,T/\eps)}$ and norms of $B$ and $E$ such that
$$\|\tilde{x}^n\|_{L^\infty(0,h)}+\|\tilde{v}^n\|_{L^\infty(0,h)}\leq C.$$
By denoting
$$\zeta_x^n(s):=x(t_n+s)-\tilde{x}^n(s),\quad \zeta_v^n(s):=v(t_n+s)-\tilde{v}^n(s),\quad 0\leq n<T/\eps,$$
and taking the difference between (\ref{model trun}) and (\ref{model}), we get for
$0\leq n<T/\eps$,
\begin{equation}\label{model trun zeta}\left\{
\begin{split}
&\dot{\zeta}_x^n(s)=\eps\zeta_v^n(s),\quad 0\leq s\leq h,\\
&\dot{\zeta}_v^n(s)=\zeta_v^n(s)\times B(\eps x(t_n))+\eps E(x(t_n+s))-\eps
E(\tilde{x}^n(s))+\xi^n_0(s),\\
&\zeta_x^n(0)=\zeta_v^n(0)=0,
\end{split}\right.
\end{equation}
where
\begin{align*}
\xi^n_0(s)=v(t_n+s)\times \left[B(\eps x(t_n+s))-B(\eps x(t_n))\right].
\end{align*}
By Taylor expansion, for some $t_n\leq t_s^n\leq t_n+s$, we have $x(t_n+s)=x(t_n)+s \eps v(t^n_s)$
and then
$$\xi^n_0(s)=s\eps^2 \int_0^1 v(t_n+s)\times \left(\nabla B\left(\eps x(t_n)+  \rho s \eps^2 v(t^n_s)\right)v(t^n_s)\right)d\rho,$$
which clearly indicates that
$$\|\xi^n_0\|_{L^\infty(0,h)}\lesssim \eps^2 h,\quad 0\leq n<T/\eps.$$
By the variation-of-constant formula of (\ref{model trun zeta}), we have
\begin{subequations}
\begin{align}
\zeta_x^n(h)=&\eps\int_0^h \zeta_v^n(s)ds,\quad 0\leq n<\frac{T}{\eps},\\
\zeta_v^n(h)=&\int_0^h
\e^{(h-s)\widehat{B}(\eps x(t_n))}\left[\eps E(x(t_n+s))-\eps E(\tilde{x}^n(s))+\xi^n_0(s)\right]ds \nonumber\\
=&\int_0^h
\e^{(h-s)\widehat{B}(\eps x(t_n))}\left[\eps\int_0^1\nabla E\left(x(t_n+s)+(\rho-1)\zeta^n_x(s)\right)\zeta^n_x(s)d\rho +\xi^n_0(s)\right]ds.\label{voc zeta}
\end{align}
\end{subequations}
The combination of the above two equations gives
\begin{align*}
  \zeta_x^n(h)=&\eps^2\int_0^h \int_0^s
\e^{(s-\sigma)\widehat{B}(\eps x(t_n))}\int_0^1\nabla E\left(x(t_n+\sigma)+(\rho-1)\zeta^n_x(\sigma)\right)\zeta^n_x(\sigma)d\rho\, d\sigma\, ds\\
&+\eps\int_0^h \int_0^s
\e^{(s-\sigma)\widehat{B}(\eps x(t_n))}\xi^n_0(\sigma)d\sigma\, ds,
\end{align*}
which by noting that
$$\left|\eps\int_0^h \int_0^s
\e^{(s-\sigma)\widehat{B}(\eps x(t_n))}\xi^n_0(\sigma)d\sigma ds\right|\lesssim \eps^3h^3,$$
and the standard Bootstrap argument leads to $|\zeta_x^n(s)|\lesssim \eps^3s^3$ for $s\in [0,h]$ with some $h\lesssim1$.
Plugging this estimate into (\ref{voc zeta}) gives for all $0\leq n<T/\eps$,
\begin{equation}\label{zeta est}
  |\zeta_x^n(h)|\lesssim \eps^3h^3,\quad |\zeta_v^n(h)|\lesssim \eps^2 h^2.
\end{equation}
Then to estimate the error of the scheme
$$e_x^{n+1}:=x(t_{n+1})-x^{n+1},\quad e_v^{n+1}:=v(t_{n+1})-v^{n+1},\quad 0\leq n< T/\eps,$$
we shall insert the truncated solution, i.e.
\begin{equation}\label{err eq}
e_x^{n+1}=\tilde{e}_x^n+\zeta_x^n(h),\quad e_v^{n+1}=\tilde{e}_v^n+\zeta_v^n(h),
\end{equation}
and then turn to estimate
$$\tilde{e}^n_x:=\tilde{x}^{n}(h)-x^{n+1},\quad \tilde{e}^n_v:=\tilde{v}^{n}(h)-v^{n+1},\quad 0\leq n< T/\eps.$$

\textbf{Local error.}
Based on the numerical scheme (\ref{scheme}) (or (\ref{TSM0})), we define the local truncation error $\xi_x^n$ and $\xi_v^n$ for $0\leq n<T/\eps$ as
\begin{subequations}\label{local error}
\begin{align}
  \tilde{x}^n(h)=&x(t_n)+h\eps\e^{h\widehat{B}(\eps x(t_n))}v(t_n)+ \frac{h^2\eps^2}{2}\int_0^1E\left(\rho x(t_n)+(1-\rho)\tilde{x}^n(h)\right)d\rho+\xi^n_x,\label{local error x}\\
  \tilde{v}^n(h)=&\e^{h\widehat{B}(\eps x(t_n))}v(t_n)+h\eps \int_0^1E\left(\rho x(t_n)+(1-\rho)\tilde{x}^n(h)\right)d\rho+\xi^n_v.\label{local error v}
  \end{align}
\end{subequations}
By the variation-of-constant formula of the truncated system (\ref{model trun}), we have
\begin{subequations}
\begin{align}
&\tilde{x}^n(h)=x(t_n)+\eps\int_0^h\tilde{v}^n(s)ds,\quad 0\leq n\leq T/\eps,\\
&\tilde{v}^n(h)=\e^{h\widehat{B}(\eps x(t_n))}v(t_n)+\eps \int_0^h
\e^{(h-s)\widehat{B}(\eps x(t_n))}E(\tilde{x}^n(s))ds,\label{vocv}
\end{align}
\end{subequations}
which further implies
\begin{align}\label{vocx}
  \tilde{x}^n(h)=x(t_n)+\eps\int_0^h\e^{s\widehat{B}(\eps x(t_n))}ds\,v(t_n)+\eps^2\int_0^h\int_0^s
\e^{(s-\sigma)\widehat{B}(\eps x(t_n))}E(\tilde{x}^n(\sigma))d\sigma\, ds.
\end{align}

We firstly analyze $\xi_v^n$.
By Taylor expansion we have in (\ref{vocv})
\begin{align}
&\eps\int_0^h
\e^{(h-s)\widehat{B}(\eps x(t_n))}E(\tilde{x}^n(s))ds\label{thm1eq2}\\
=&\eps\int_0^h\left[I-(s-h)\e^{(h-t^n_s)\widehat{B}(\eps x(t_n))}
\widehat{B}(\eps x(t_n))\right]
E(\tilde{x}^n(s))ds\nonumber\\
=&h\eps\int_0^1E\left(\tilde{x}^n\left((1-\rho)h\right)\right)d\rho
-\eps\int_0^h(s-h)\e^{(h-t^n_s)\widehat{B}(\eps x(t_n))}
\widehat{B}(\eps x(t_n))
E(\tilde{x}^n(s))ds,\nonumber
\end{align}
where $t^n_s\in[s,h]$. Furthermore, by noting that
\begin{align*}
\tilde{x}^n\left((1-\rho)h\right)+\zeta_x^n\left((1-\rho)h\right)
=&x\left(t_n+(1-\rho)h\right)\\
=&x(t_{n+1})-h\rho\eps v(t^n_\rho)\\
=&x(t_{n+1})-\rho\left(x(t_{n+1})-x(t_n)\right)+\rho\left(x(t_{n+1})-x(t_n)\right)-h\rho \eps v(t^n_\rho)\\
=&\rho x(t_n)+(1-\rho)x(t_{n+1})+h\rho\eps\left(v(\tilde{t}^n_\rho)-v(t^n_\rho)\right),
\end{align*}
for some $t^n_\rho,\tilde{t}^n_\rho\in[t_n,t_{n+1}]$, we find
\begin{align}
h\eps\int_0^1E\left(\tilde{x}((1-\rho)h)\right)d\rho
=&h\eps\int_0^1E\left(\rho x(t_n)+(1-\rho)x(t_{n+1})\right)d\rho\label{thm1eq3}\\
&+h\eps\int_0^1\int_0^1E'(s_\sigma)d\sigma \left[h\rho\eps\left(v(\tilde{t}^n_\rho)-v(t^n_\rho)\right)
-\zeta_x^n((1-\rho)h)\right]d\rho,\nonumber
\end{align}
where $E'$ denotes the derivative of $E$ and
\begin{align*}
 s_\sigma=\rho x(t_n)+(1-\rho)x(t_{n+1})+
\sigma h\rho\eps \left(v(\tilde{t}^n_\rho)-v(t^n_\rho)\right)-\sigma \zeta_x^n((1-\rho)h).
\end{align*}
By subtracting (\ref{local error v}) from (\ref{vocv}) and combing (\ref{thm1eq2})-(\ref{thm1eq3}), we find that
\begin{align*}
  \xi_v^n=&-\eps\int_0^h(s-h)\e^{(h-t^n_s)\widehat{B}(\eps x(t_n))}
\widehat{B}(\eps x(t_n))
E(\tilde{x}^n(s))ds\\
&+h\eps\int_0^1\int_0^1E'(s_\sigma)d\sigma \left[h\rho\eps\left(v(\tilde{t}^n_\rho)-v(t^n_\rho)\right)
-\zeta_x^n((1-\rho)h)\right]d\rho,
\end{align*}
which under our assumption clearly implies
\begin{equation}\label{thm1eq8}|\xi_v^n|\lesssim \eps h^2,\quad 0\leq n<T/\eps.
\end{equation}

Next, we estimate $\xi_x^n$. Subtracting (\ref{vocx}) from (\ref{local error x}), we  find
$$\xi^n_x=\xi^n_{x,1}+\xi^n_{x,2},\quad 0\leq n<T/\eps,$$
with
\begin{subequations}
\begin{align}
 \xi^n_{x,1}=&\eps\int_0^h\e^{s\widehat{B}(\eps x(t_n))}ds\,v(t_n)-
 h\eps\e^{h\widehat{B}(\eps x(t_n))}v(t_n),\label{xi_x1}\\
 \xi^n_{x,2}=&\eps^2\int_0^h\int_0^s
\e^{(s-\sigma)\widehat{B}(\eps x(t_n))}E(\tilde{x}^n(\sigma))d\sigma ds
- \frac{h^2\eps^2}{2}\int_0^1E\left(\rho x(t_n)+(1-\rho)\tilde{x}^n(h)\right)d\rho.\label{xi_x2}
\end{align}
\end{subequations}
By the error of the right-rectangle rule, it is direct to see
$$|\xi^n_{x,1}|\lesssim \eps h^2,\quad 0\leq n<T/\eps.$$
For $\xi^n_{x,2}$, firstly we have
\begin{align*}
 &\eps^2\int_0^h\int_0^s
\e^{(s-\sigma)\widehat{B}(\eps x(t_n))}E(\tilde{x}^n(\sigma))d\sigma ds\\
=&\eps^2\int_0^h\int_0^s
\left[I-(\sigma-s)\widehat{B}(\eps x(t_n))\e^{s^n_\sigma\widehat{B}(\eps x(t_n))}\right]E(\tilde{x}^n(\sigma))d\sigma ds,
\end{align*}
for some $s^n_\sigma\in[0,s]$,
and so
\begin{align}
&\eps^2\int_0^h\int_0^s
\e^{(s-\sigma)\widehat{B}(\eps x(t_n))}E\left(\tilde{x}^n(\sigma)\right)d\sigma ds
+\eps^2\int_0^h\int_0^s
(\sigma-s)\widehat{B}(\eps x(t_n))\e^{s^n_\sigma\widehat{B}(\eps x(t_n))}E\left(\tilde{x}^n(\sigma)\right)d\sigma ds\nonumber\\
=&\eps^2\int_0^h\int_0^sE(\tilde{x}^n(\sigma))d\sigma ds=\eps^2\int_0^h s\int_0^1E\left(\tilde{x}^n((1-\rho)s)\right)d\rho ds\nonumber\\
 =&\frac{h^2\eps^2}{2}\int_0^1E\left(\tilde{x}^n((1-\rho)h)\right)d\rho
+\eps^2\int_0^h s(s-h)\int_0^1
E'\left(\tilde{x}^n((1-\rho)\tilde{s}^n_\sigma)\right)
\eps\tilde{v}^n((1-\rho)\tilde{s}^n_\sigma)(1-\rho)d\rho ds, \label{thm1eq4}
\end{align}
for some $\tilde{s}^n_\sigma\in[0,h]$.
Then by plugging (\ref{thm1eq4}) into (\ref{xi_x2}) and further using  (\ref{thm1eq3}), it is clear that
$$|\xi^n_{x,2}|\lesssim \eps^2h^3,\quad 0\leq n<T/\eps,$$
and thus
\begin{equation}\label{thm1eq7}
|\xi_{x}^n|\lesssim\eps h^2,\quad 0\leq n<T/\eps.
\end{equation}

\textbf{Induction for boundedness.} With the above preparation, we now carry out induction proof for the boundedness of the numerical solution (\ref{bounded}).
 For $n=0$, (\ref{bounded}) is obviously true since $x^0=x_0$ and $v^0=v_0$. Then we assume (\ref{bounded}) is true up to some $0\leq m<T/\eps$, and we shall show that  (\ref{bounded}) holds for $m+1$.

For $n\leq m$, subtracting (\ref{local error}) from the scheme (\ref{scheme}), and by further using (\ref{err eq}), we get
\begin{subequations}\label{lm err}
\begin{align}
 &e^{n+1}_x=e^n_x+h\eps\e^{h\widehat{B}(\eps x(t_n))} e^n_v+\eta_x^n+\xi^n_{x}+\zeta_x^n(h),\label{ex eq}\\
 &e^{n+1}_v= \e^{h\widehat{B}(\eps x(t_n))}e^n_v+\eta_v^n+\xi^n_v+\zeta_v^n(h),\quad 0\leq n\leq m,\label{ev eq}
 \end{align}
 \end{subequations}
 where we denote
 \begin{align*}
   \eta_x^n=&h\eps\left(\e^{h\widehat{B}(\eps x(t_n))}-\e^{h\widehat{B}(\eps x^n)}\right)v^n\\
   &+\frac{h^2\eps^2}{2}\int_0^1\left[E\left(\rho x(t_n)+(1-\rho)\tilde{x}^n(h)
   \right)-E\left(\rho x^n+(1-\rho)x^{n+1}
   \right)\right]d\rho,\\
   \eta_v^n=&\left(\e^{h\widehat{B}(\eps x(t_n))}-\e^{h\widehat{B}(\eps x^n)}\right)v^n\\
   &+h\eps\int_0^1\left[E\left(\rho x(t_n)+(1-\rho)\tilde{x}^n(h)
   \right)-E\left(\rho x^n+(1-\rho)x^{n+1}
   \right)\right]d\rho.
 \end{align*}
Thanks to the induction assumption of the boundedness, it is direct to observe that
 \begin{align}\label{eta}
|\eta_x^n|\lesssim h^2\eps^2\left(|e^n_x|+|e^{n+1}_x|+|\zeta_x^n(h)|\right),\quad
|\eta_v^n|\lesssim h\eps\left(|e^n_x|+|e^{n+1}_x|+|\zeta_x^n(h)|\right),\quad 0\leq n<m.
 \end{align}

By taking the absolute value (euclideam norm) on both sides of (\ref{ex eq}) and (\ref{ev eq}) and then using  triangle inequality, noting the orthogonality of the matrix $\e^{h\widehat{B}}$, we get
\begin{align*}
 &|e^{n+1}_x|\leq |e^n_x|+h\eps| e^n_v|+|\eta_x^n|+|\xi^n_{x}|+|\zeta_x^n(h)|,\\
 &|e^{n+1}_v|\leq |e^n_v|+|\eta_v^n|+|\xi^n_v|+|\zeta_v^n(h)|,\quad 0\leq n\leq m.
 \end{align*}
By further adding them together and using (\ref{eta}), we get
 \begin{align*}
   |e^{n+1}_x|+|e^{n+1}_v|-|e^n_x|-|e^n_v|\lesssim &h\eps \left(|e^n_v|+|e^n_x|+|e^{n+1}_x|\right)+|\xi^n_x|
   +|\xi^n_v|+|\zeta^n_x|+|\zeta^n_v|,\quad 0\leq n\leq m.
 \end{align*}
Summing them up for $0\leq n\leq m$ and noting $e^0_x=e^0_v=0$, we obtain
 $$|e^{m+1}_x|+|e^{m+1}_v|\lesssim h\eps \sum_{n=0}^{m}\left(|e^n_v|+|e^n_x|+|e^{n+1}_x|\right)+
 \sum_{n=0}^{m}\left(|\xi^n_x|
   +|\xi^n_v|+|\zeta^n_x|+|\zeta^n_v|\right).
 $$
 By estimates of the truncation errors in (\ref{zeta est}), (\ref{thm1eq8}) and (\ref{thm1eq7}), and noting $mh\eps\lesssim 1$, we get
$$ |e^{m+1}_x|+|e^{m+1}_v|\lesssim h\eps \sum_{n=0}^{m}\left(|e^n_v|+|e^n_x|+|e^{n+1}_x|\right)+h,$$
which then by Gronwall's inequality gives
$$|e^{m+1}_x|+|e^{m+1}_v|\lesssim h,\quad 0\leq m<T/\eps.$$
Since
$$|x^{m+1}|\leq |x(t_{m+1})|+|e^{m+1}_x|,\quad |v^{m+1}|\leq |v(t_{m+1})|+|e^{m+1}_v|,$$
so there exists a generic constant $h_0>0$ independent of $\eps$ and $m$, such that for $0<h\leq h_0$, (\ref{bounded}) holds for $m+1$, which finishes the induction and the proof of this convergence lemma.
\end{proof}


Now, we give the proof of the main convergence result Theorem \ref{Convergence2},  which refines the error bounds to an optimal dependence in $\eps$.

\emph{Proof of Theorem \ref{Convergence2}.}

\begin{proof}
  For any fixed $T>0$, we can have
  $$\frac{T}{\eps}=T_0M+t_r,\quad 0\leq t_r<T_0,$$
 where
  the integer $$M=\left\lfloor\frac{T}{\eps T_0}\right\rfloor=\bigo(1/\eps).$$
  For the integration error on $t_r$, it is just a cumulation of the truncation error (\ref{zeta est}), (\ref{thm1eq8}) and (\ref{thm1eq7}) on a time interval less than one period. So without loss generality, we assume $t_r=0$ in the following proof for simplicity.

  \textbf{Update of notations.}
 First of all, we find the $N_0>0$ by satisfying the condition $h=T_0/N\leq h_0$ given in Lemma \ref{Convergence}, and so when $N\geq N_0$, we have the boundedness (\ref{bounded}). To describe the time scale more clearly, let us renew our notations by denoting $t_n^m$ for $0\leq n\leq N$ as the time grids within the $m$-th period, i.e.
$$t^m_n=mT_0+nh,\quad 0\leq m<M,$$
then we denote the numerical solution from the scheme (\ref{scheme}) at $t_n^m$ as
$$x^{m}_n\approx x(t^m_n),\quad v^{m}_n\approx v(t^m_n),\quad 0\leq m<M,
\quad 0\leq n\leq N,
$$
and the error as
$$e^{n,m}_x=x(t^m_n)-x^{m}_n,\quad e^{n,m}_v=v(t^m_n)-v^{m}_n.$$
Note by our notation, $e_x^{0,m+1}=e_x^{N,m}$ and $e_v^{0,m+1}=e_v^{N,m}$.
Accordingly, the error equation (\ref{lm err}) now reads
\begin{subequations}\label{thm err}
\begin{align}
 &e^{n+1,m}_x=e^{n,m}_x+h\eps\e^{h\widehat{B}(\eps x(t_n^m))} e^{n,m}_v+\eta_x^{n,m}+\xi^{n,m}_{x}+\zeta_x^{n,m}(h),\label{ex eq 2}\\
 &e^{n+1,m}_v= \e^{h\widehat{B}(\eps x(t_n^m))}e^{n,m}_v+\eta_v^{n,m}+\xi^{n,m}_v+\zeta_v^{n,m}(h),\quad 0\leq n\leq N-1,\ 0\leq m<M.\label{ev eq 2}
 \end{align}
 \end{subequations}
The notations for the other error terms are updated in the straightforward manner. For example,
we denote $\xi_{x,1}^{n,m}$ as the local error introduced in (\ref{xi_x1}) at $t_n^m$ level:
\begin{equation}\label{xi10add}
\xi^{n,m}_{x,1}=\eps\int_0^h\e^{s\widehat{B}(\eps x(t_n^m))}ds\,v(t_n^m)-
 h\eps\e^{h\widehat{B}(\eps x(t_n^m))}v(t_n^m).\end{equation}

 Similarly as the proof of Lemma \ref{Convergence}, from the error equation (\ref{thm err}),
 we find
 \begin{align*}
&\frac{1}{\eps}|e^{j,m}_x|-\frac{1}{\eps}|e^{j-1,m}_x|\lesssim  h|e^{j,m}_v|+\eps h\left(
 |e_x^{j,m}|+|e_x^{j-1,m}|\right)+ h^2,\\
& |e^{j,m}_v|-|e^{j-1,m}_v|\lesssim \eps h\left(
 |e_x^{j,m}|+|e_x^{j-1,m}|\right)+\eps h^2,\quad 1\leq j\leq N,\ 0\leq m<M,
 \end{align*}
 where this time we divided (\ref{ex eq 2}) by $\eps$ to gain a better control of error in $v$.
By adding the above two inequalities together and summing up for $j=1,\ldots,n$ for any $1\leq n\leq N$, and then by Gronwall's inequality, we are able to get the estimate of the error within each period:
$$
\frac{1}{\eps}|e^{n,m}_x|+ |e^{n,m}_v|\lesssim h+\frac{1}{\eps}|e^{0,m}_x|+ |e^{0,m}_v|,\quad 1\leq n\leq N,\ 0\leq m<M,
$$
and so by $|e^{0,m}_v|\lesssim h$, we get
\begin{equation}\label{thmeq add}
|e^{n,m}_x|\lesssim \eps h+|e^{0,m}_x|,\quad 1\leq n\leq N,\ 0\leq m<M.
\end{equation}

 \textbf{Refined local error.}
We now refine  the estimate for $\xi_{x,1}^{n,m}$. Directly, we see that
\begin{equation}\label{thm1eq5}
|B(\eps x(t))-B(0)|\lesssim\eps,\quad 0\leq t\leq T/\eps,\end{equation}
and then by comparison with the free flow $\e^{t\widehat{B}(0)}$, it shows
\begin{equation}\label{thm1eq6}
\left|v(mT_0+t)-\e^{t\widehat{B}(0)}v(mT_0)\right|\leq Ct\eps,\quad 0\leq t\leq T_0,\end{equation}
for some constants $C>0$ independent of $\eps$ and $t$.
With these two facts, by denoting $B_0=\widehat{B}(0)$ for short, we split the $\xi_{x,1}^{n,m}$ in (\ref{xi10add}) into two parts:
$$\xi_{x,1}^{n,m}=\xi_{x,1,1}^{n,m}+\xi_{x,1,2}^{n,m},\quad 0\leq n< N,$$ where
\begin{align*}
\xi_{x,1,1}^{n,m}:= \eps\int_0^h\e^{sB_0}ds \e^{t_n^mB_0}v(mT_0)-\eps h\e^{hB_0}\e^{t_n^mB_0}v(mT_0),
\end{align*}
and
\begin{align*}
  \xi_{x,1,2}^{n,m}:=&
  \eps\int_0^h\left(\e^{s\widehat{B}(\eps x(t_n^m))}-\e^{sB_0}\right)ds\,v(t_n^m)
  -
 h\eps\left(\e^{h\widehat{B}(\eps x(t_n^m))}-\e^{hB_0}\right)v(t_n^m)\\
&+\eps\int_0^h\e^{sB_0}ds\left(v(t_n^m)-\e^{t_n^mB_0}v(mT_0)\right)
-h\eps\e^{hB_0}\left(v(t_n^m)
 -\e^{t_n^mB_0}v(mT_0)\right).
\end{align*}

We begin with $\xi_{x,1,2}^{n,m}$.
Clearly by (\ref{thm1eq5}),
$$\left|\e^{s\widehat{B}(\eps x(t_n^m))}-\e^{sB_0}\right|\lesssim s\eps,\quad 0\leq s\leq h.$$
As for the last two terms in $\xi_{x,1,2}^{n,m}$, we first observe that
\begin{align*}
&\eps\int_0^h\e^{sB_0}ds\left(v(t_n^m)-\e^{t_n^mB_0}v(mT_0)\right)
-h\eps\e^{hB_0}\left(v(t_n^m)
 -\e^{t_n^mB_0}v(mT_0)\right)\\
=&\eps\int_0^h(s-h)\e^{t_sB_0}B_0ds\left(v(t_n^m)-\e^{t_n^mB_0}v(mT_0)\right),
\end{align*}
for some $t_s\in [0,h]$. Moreover, thanks to periodicity and (\ref{thm1eq6}), we find
$$v(t_n^m)-\e^{t_n^mB_0}v(mT_0)
=v(mT_0+nh)-\e^{nhB_0}v(mT_0)=O(\eps).$$
Therefore, all together we find
$$\left|\xi_{x,1,2}^{n,m}\right|\lesssim h^2\eps^2.$$
For $\xi_{x,1,1}^{n,m}$, we sum them up for $n=0,\ldots,N-1$, to obtain
\begin{align*}
 \chi^m:=\sum_{n=0}^{N-1}\xi_{x,1,1}^{n,m}
 =\eps \int_0^{T_0}\e^{sB_0}ds v(mT_0)-\eps h\sum_{n=0}^{N-1}\e^{(n+1)h
 B_0}v(mT_0),\quad 0\leq m<M.
\end{align*}
Note $\chi^m$ reads precisely as the quadrature error of trapezoidal rule for the integration of the smooth periodic function $\e^{sB_0}$ on a period, and so
$$|\chi^m|\lesssim \eps N^{-m_0},\quad 0\leq m<M,$$
for some $m_0>0$ arbitrarily large. Thus, in total we find
$$\left|\sum_{n=0}^{N-1}\xi_{x,1}^{n,m}\right|\leq|\chi^m|
+\left|\sum_{n=0}^{N-1}\xi_{x,1,2}^{n,m}\right|\lesssim \eps^2 h+\eps N^{-m_0}.$$

\textbf{Refined error equation.} We now need a clearer description of how the error propagates through each period.
For some $0\leq m<M$, by summing (\ref{ex eq 2}) up for $n=0,\ldots N-1$, we get
 $$e^{N,m}_x=e_x^{0,m}+h\eps\sum_{n=0}^{N-1} \e^{h\widehat{B}(\eps x(t_n^m))}e^{n,m}_v+\sum_{n=0}^{N-1}\left(
 \eta_x^{n,m}+\zeta_x^{n,m}(h)\right)+
 \sum_{n=0}^{N-1}\left(\xi_{x,1}^{n,m}+\xi_{x,2}^{n,m}\right),$$
then by using (\ref{thm1eq5}), we see
\begin{align}\label{thm1eq15}
 e^{N,m}_x=e_x^{0,m}+h\eps\sum_{n=0}^{N-1} \e^{hB_0}e^{n,m}_v+\sum_{n=0}^{N-1}\left(
 \eta_x^{n,m}+\zeta_x^{n,m}(h)\right)+\sum_{n=0}^{N-1}
 \left(\xi_{x,1}^{n,m}+\xi_{x,2}^{n,m}\right)
 +\delta_x^m,
\end{align}
where thanks to $e^{n,m}_v=O(h)$ from Lemma \ref{Convergence},
$$|\delta_x^m|\lesssim \eps^2h^2,\quad 0\leq m< M.$$
On the other hand, similarly by (\ref{thm1eq5}), (\ref{ev eq 2}) can be written as
\begin{align}\label{thm1eq17}
  e^{n,m}_v= \e^{hB_0}e^{n-1,m}_v+\eta_v^{n-1,m}+\xi^{n-1,m}_v+\zeta_v^{n-1,m}(h)+
  \delta_v^{n-1,m},\quad 1\leq n\leq N,\ 0\leq m<M,
\end{align}
where \begin{equation}\label{thm2eq2}
\delta_v^{n-1,m}=\left(\e^{h\widehat{B}(\eps x(t_{n-1}))}-\e^{hB_0}\right)e^{n-1,m}_v,\quad\mbox{and}\quad \left|\delta_v^{n-1,m}\right|\lesssim \eps h^2.\end{equation}
 Recursively from (\ref{thm1eq17}), we find for any $1\leq n\leq N,\ 0\leq m<M$,
\begin{align*}
  e^{n,m}_v= \e^{nhB_0}e^{0,m}_v+\sum_{j=0}^{n-1}\e^{(n-1-j)hB_0}
  \left[\eta_v^{j,m}+\xi^{j,m}_v+\zeta_v^{j,m}(h)+
  \delta_v^{j,m}\right],
\end{align*}
and so
\begin{align*}
h\eps\sum_{n=0}^{N-1}\e^{hB_0} e^{n,m}_v= h\eps\sum_{n=0}^{N-1}\e^{(n+1)hB_0}e^{0,m}_v+h\eps\sum_{n=0}^{N-1}
\sum_{j=0}^{n-1}\e^{(n-j)hB_0}\left[\eta_v^{j,m}+\xi^{j,m}_v+\zeta_v^{j,m}(h)+
  \delta_v^{j,m}\right].
\end{align*}
Now with the above equation, (\ref{thm1eq15}) can be written as
\begin{align}\label{thm1eq16}
e^{N,m}_x=e_x^{0,m}+h\eps\sum_{n=0}^{N-1} \e^{(n+1)hB_0}e^{0,m}_v+\gamma^m,\quad
0\leq m<M.
\end{align}
where
\begin{align*}
\gamma^m:=&\sum_{n=0}^{N-1}\left(
 \eta_x^{n,m}+\zeta_x^{n,m}(h)\right)+\sum_{n=0}^{N-1}(\xi_{x,1}^{n,m}+\xi_{x,2}^{n,m})
 +\delta_x^m\\
 &+h\eps\sum_{n=0}^{N-1}\sum_{j=0}^{n-1}\e^{(n-j)hB_0}\left[\eta_v^{j,m}+\xi^{j,m}_v+\zeta_v^{j,m}(h)+
  \delta_v^{j,m}\right].
\end{align*}
Noting from (\ref{thm2eq2}), (\ref{eta}), (\ref{thm1eq8}) and (\ref{zeta est}), for the last term in the above we have
$$\left|h\eps\sum_{n=0}^{N-1}\sum_{j=0}^{n-1}\e^{(n-j)hB_0}\left[\eta_v^{j,m}+\xi^{j,m}_v+\zeta_v^{j,m}(h)+
  \delta_v^{j,m}\right]\right|
  \lesssim \eps^2h+\eps^2h\sum_{n=0}^{N-1}\left(|e_x^{n,m}|+|e_x^{n+1,m}|\right),$$
 and therefore we find
  $$|\gamma^m|\lesssim\eps^2h+\eps N^{-m_0}+\eps^2h\sum_{n=0}^{N-1}\left(|e_x^{n,m}|+|e_x^{n+1,m}|\right),\quad 0\leq m<M.$$
By the quadrature error of trapezoidal rule again,  we then deduce from (\ref{thm1eq16})
\begin{align}\label{thm1eq20}
\left|e^{N,m}_x\right|-\left|e_x^{0,m}\right|\lesssim\eps\left|\int_0^{T_0} \e^{sB_0}ds e^{0,m}_v\right|+\eps^2h+\eps N^{-m_0}+\eps^2h\sum_{n=0}^{N-1}\left(|e_x^{n,m}|+|e_x^{n+1,m}|\right),\
0\leq m<M.
\end{align}

By the Rodrigues' formula, we have
\begin{align*}
 \e^{sB_0}e^{0,m}_v=\cos(s|B(0)|)e^{0,m}_v
 +\sin(s|B(0)|)e^{0,m}_v\times \tilde{B}_0+\left(1-\cos(s|B(0)|)\right)\left(\tilde{B}_0\cdot
 e^{0,m}_v\right)\tilde{B}_0,
\end{align*}
where $\tilde{B}_0$ is normalized magnetic field vector at origin, i.e. $\tilde{B}_0=B(0)/|B(0)|$. The integration of the above term over one period only leaves
$$\int_0^{T_0} \e^{sB_0}ds e^{0,m}_v
=T_0 \left(\tilde{B}_0\cdot
 e^{0,m}_v\right)\tilde{B}_0.$$
 Thus, (\ref{thm1eq20}) tells
\begin{align*}
\left|e^{N,m}_x\right|-\left|e_x^{0,m}\right|\lesssim&\eps\left|\left(\tilde{B}_0\cdot
 e^{0,m}_v\right)\tilde{B}_0\right|+\eps^2h+\eps N^{-m_0}+\eps^2h\sum_{n=0}^{N-1}\left(\left|e_x^{n,m}\right|
 +\left|e_x^{n+1,m}\right|\right)\\
\lesssim&\eps\left| e^{0,m}_{v,\parallel}\right|+\eps^2h+\eps N^{-m_0}+\eps^2h\sum_{n=0}^{N-1}\left(\left|e_x^{n,m}\right|
+\left|e_x^{n+1,m}\right|\right),
 \quad
0\leq m<M,
\end{align*}
where  $e^{0,m}_{v,\parallel}$ denotes the error $e^{0,m}_v$ in the parallel direction of the magnetic field $B(\eps x(mT_0))$, i.e.
$$e^{n,m}_{v,\parallel}:=(\tilde{B}^{n,m}\cdot
 e^{n,m}_v)\tilde{B}^{n,m},\quad \tilde{B}^{n,m}:=\frac{B(\eps x(t^m_n))}{\left|B(\eps x(t^m_n))\right|},\quad 0\leq n\leq N,\ 0\leq m<M.$$
Then by (\ref{thmeq add}) and noting $e^{N,m}_x=e^{0,m+1}$, we get
\begin{align}\label{thm1eq18}
\left|e^{0,m+1}_x\right|-\left|e_x^{0,m}\right|
\lesssim\eps\left|e^{0,m}_{v,\parallel}\right|
+\eps^2\left(\left|e_x^{0,m}\right|
+\left|e_x^{0,m+1}\right|\right)
+\eps^2h+\eps N^{-m_0},\quad
0\leq m<M.
\end{align}

Next, we take inner product on both sides of (\ref{ev eq 2}) with the unit vector $\tilde{B}^{n+1,m}$ to get
\begin{equation}\label{thm2eq6}
\left|e^{n+1,m}_{v,\parallel}\right|\leq \left|\tilde{B}^{n+1,m}\cdot\left(\e^{hB(\eps x(t_n^m))}e^{n,m}_v\right)\right|
+\left|\eta_v^{n,m}+\zeta_v^{n,m}(h)\right|
+\left|\xi^{n,m}_v\cdot\tilde{B}^{n+1,m}\right|.
\end{equation}
By noting
$$\tilde{B}^{n+1,m}=\tilde{B}^{n,m}+\bigo(\eps^2h), $$
as well as the Rodrigues' formula, we get
$$\tilde{B}^{n+1,m}\cdot \left(\e^{hB(\eps x(t_n^m))}e^{n,m}_v\right)=e^{n,m}_{v,\parallel}
+\bigo(\eps^2h^2).$$
Then together with (\ref{eta}) and (\ref{zeta est}), we get from (\ref{thm2eq6}) that  for $0\leq n\leq N,\ 0\leq m<M$,
\begin{equation}\label{thm2eq1}
\left|e^{n+1,m}_{v,\parallel}\right|-\left| e^{n,m}_{v,\parallel}\right|\lesssim h\eps
 \left(\left|e_x^{n+1,m}\right|+\left|e_x^{n,m}\right|\right)
+\left|\xi^{n,m}_v\cdot\tilde{B}^{n,m}\right|+\eps^2h^2.\end{equation}
 Recall from (\ref{local error v}) that $\xi^{n,m}_v$ is defined as
 \begin{align*}
\xi^{n,m}_v= \eps \int_0^h
\e^{(h-s)\widehat{B}(\eps x(t_n^m))}E(\tilde{x}^{n,m}(s))ds-
\eps \int_0^h E(\tilde{x}^{n,m}(s))ds,
 \end{align*}
 then the  Rodrigues' formula implies simply
 \begin{align*}
\xi^{n,m}_v\cdot\tilde{B}^{n,m}=0.
 \end{align*}
 Therefore, (\ref{thm2eq1}) gives
 \begin{equation}\label{thm2eq3}
 \left|e^{n+1,m}_{v,\parallel}\right|-\left| e^{n,m}_{v,\parallel}\right|\lesssim h\eps
 \left(\left|e_x^{n+1,m}\right|+\left|e_x^{n,m}\right|\right)+\eps^2h^2,\quad
 0\leq n<N,\ 0\leq m<M.
  \end{equation}
Summing up (\ref{thm2eq3}) for $n=0,\ldots,N-1$, gives
  $$\left|e^{0,m+1}_{v,\parallel}\right|-\left| e^{0,m}_{v,\parallel}\right|\lesssim h\eps\sum_{n=0}^{N-1}
 \left(\left|e_x^{n+1,m}\right|+\left|e_x^{n,m}\right|\right)+\eps^2h.$$
 Plugging (\ref{thmeq add}) into the above, we get
 \begin{equation}\label{thm2eq5}
 \left|e^{0,m+1}_{v,\parallel}\right|-\left| e^{0,m}_{v,\parallel}\right|\lesssim \eps
 \left(\left|e_x^{0,m}\right|+\left|e_x^{0,m+1}\right|\right)+\eps^2h,\quad 0\leq m<M.
  \end{equation}

  Finally, combining (\ref{thm2eq5}) and (\ref{thm1eq18}), we get
  \begin{align*}
&\left|e^{0,m+1}_x\right|+\left|e^{0,m+1}_{v,\parallel}\right|-
\left|e_x^{0,m}\right|
- \left| e^{0,m}_{v,\parallel}\right|\\
\lesssim&\eps\left[\left|e^{0,m}_{v,\parallel}\right|
+\left|e_x^{0,m}\right|
+\left|e_x^{0,m+1}\right|\right]
+\eps^2h+\eps N^{-m_0},\quad
0\leq m<M,
\end{align*}
then by Gronwall' inequality with noting
 $e_x^{0,0}=e_{v,\parallel}^{0,0}=0$, we find
 \begin{align*}
\left|e^{0,m}_x\right|+\left|e^{0,m}_{v,\parallel}\right|\lesssim
\eps h+ N^{-m_0},\quad 0\leq m\leq M.
\end{align*}
The estimates at the intermediates time grids, i.e. $e^{n,m}_x$ and
$e^{n,m}_{v,\parallel}$ for $0<n<N$, are direct results of (\ref{thm2eq3}) and (\ref{thmeq add}), and the whole proof is done.

\end{proof}

We finish this section by remarking that
the uniform error bound $\bigo(h)$ appears to be also true for the presented Lie-Trotter type splitting schemes under a general strong magnetic field $B(x)$ in (\ref{charged-particle sts}), based on our numerical evidence. This will be shown in section \ref{sec:num}, but  the above analysis under the general case is more challenging and is still undergoing. As one of the major difficulty, the corresponding $\eta_v^n$ will lose a factor of $\eps$ in (\ref{eta}), which causes stability issue of the error propagation through (\ref{lm err}) up to the $\bigo(1/\eps)$ final time under the approach. This motivates us to consider other approaches for analysis in the next section.

\section{Convergence in general case}\label{sec:analysis2}
In the case of general strong magnetic field in the CPD (\ref{charged-particle sts}), we give the following convergence result of the presented splitting schemes.

\begin{theo}\label{them:Convergence} {(Convergence for general strong magnetic field)}
For the general strong  magnetic field $\frac{1}{\eps} B(x)$ with $0<\eps\ll 1$ and under conditions that
\begin{itemize}
  \item[a)] the initial value of  \eqref{charged-particle sts} is assumed to have an $\eps$-independent bound $M$;
  \item[b)] there is  a bounded set $K$ (independent of~$\eps$) such that for $0\le t \le T$ the exact solution $x(t)$ of \eqref{charged-particle sts} stays in $K$;
  \item[c)] the step size $h$ satisfies $h \leq C \eps$ and   the following non-resonance condition is assumed:
\begin{equation}\label{non-res-exact}
\left|\sinc\left( k\frac{h}{2\eps} |B(x(t))|\right)\right| \ge c >0 \qquad\text{for }\ k=1,2;
\end{equation}
\end{itemize}
the global errors of Algorithms \ref{alg:EPS}, \ref{alg:ES} and \ref{alg:VPS} satisfy the bounds
$$ \abs{x^{n} -x(t_n)}\lesssim \eps, \ \  \abs{v_\parallel^{n} -v_\parallel(t_n)}\lesssim \eps,\ \
\abs{v^n - v(t_n)}\lesssim 1,\ \ 0\leq n \leq T/h.$$
The constants before the errors depend on~$M,K,C,c$ and on the bounds of derivatives of~$B$ and~$E$.
\end{theo}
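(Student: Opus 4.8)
The plan is to follow the route announced in the introduction and replace the energy/Gronwall argument (which fails here because $\eta_v^n$ no longer carries a spare factor of $\eps$) by a comparison of the modulated Fourier expansions (MFE) of the exact and of the numerical flow. The fast motion is the gyration about the field line at frequency $\omega(t)=|B(x(t))|/\eps$, so the natural ansatz for the exact solution of \eqref{charged-particle sts} is
\begin{equation*}
\binom{x(t)}{v(t)}=\sum_{|k|\le K}\binom{\xi^k(t)}{\eta^k(t)}\,\e^{\ii k\phi(t)/\eps},\qquad \dot\phi(t)=|B(x(t))|,
\end{equation*}
with modulation functions $\xi^k,\eta^k$ whose derivatives are bounded uniformly in $\eps$ and with $\xi^{-k}=\overline{\xi^k}$, $\eta^{-k}=\overline{\eta^k}$. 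First I would insert this ansatz into the CPD, expand $B$ and $E$ around the guiding centre $\xi^0$, and collect terms carrying the same phase factor $\e^{\ii k\phi/\eps}$. This produces the modulation system: the $k=\pm1$ relations are algebraic to leading order and fix the gyration amplitude $\eta^{\pm1}=\bigo(1)$ together with $\xi^{\pm1}=\bigo(\eps)$ (one power of $\eps$ gained because position is recovered from velocity through the $1/\omega\sim\eps$ factor), while the $k=0$ equation is the guiding-centre/drift system governing $\xi^0$ and the parallel velocity $\eta^0_\parallel$. Solving these relations iteratively in powers of $\eps$ yields bounded coefficients and, after truncation at a suitable order $N$, a defect estimate showing the truncated sum satisfies the CPD up to a residual of size $\bigo(\eps^{N})$ on $[0,T]$; standard Gronwall then gives that the truncated MFE approximates $x(t),v(t)$ to the same order.

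Next I would build the analogous \emph{discrete} MFE for the numerical solution, writing $x^n,v^n$ as sums $\sum_k \binom{\Xi^k(t_n)}{H^k(t_n)}\e^{\ii k\Phi_n/\eps}$ with a discrete phase $\Phi_n$ increasing by $\approx h|B|$ per step. Inserting this into the Lie--Trotter scheme (for definiteness S1-AVF, \eqref{TSM0}) and separating frequencies, the propagator $\e^{\frac{h}{\eps}\widehat B}$ contributes the rotation $\e^{\ii k h|B|/\eps}$ on the $k$-th mode, and solving for the discrete modulation functions requires inverting operators whose symbols contain the factors $\sinc\!\big(kh|B|/(2\eps)\big)$ produced when summing the geometric oscillation over a step. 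This is precisely where hypothesis (c) enters: the non-resonance condition \eqref{non-res-exact} for $k=1,2$ keeps these symbols bounded away from zero, so the discrete modulation equations are solvable and their coefficients inherit the same size estimates ($H^{\pm1}=\bigo(1)$, $\Xi^{\pm1}=\bigo(\eps)$, plus a $k=0$ drift part) uniformly in $\eps$ and in $n\le T/h$.

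Finally I would compare the two expansions. Matching the common initial data $x^0=x_0$, $v^0=v_0$ fixes the starting values of both families of modulation functions, and subtracting the continuous from the discrete modulation system shows, mode by mode, that $\Xi^k-\xi^k=\bigo(\eps)$ and $H^0-\eta^0=\bigo(\eps)$, while the gyration amplitudes only satisfy $H^{\pm1}-\eta^{\pm1}=\bigo(1)$ because the scheme cannot resolve the phase of the fast rotation when $h\sim\eps$. Reading the estimates off term by term then yields exactly the three stated bounds: $|x^n-x(t_n)|\lesssim\eps$ (position is dominated by the $k=0$ drift plus the $\bigo(\eps)$ gyroradius), $|v^n_\parallel-v_\parallel(t_n)|\lesssim\eps$ (the parallel velocity lives in the non-oscillatory $k=0$ mode), and $|v^n-v(t_n)|\lesssim1$ (carried by the perpendicular, fast-rotating part).

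The hard part will be the rigorous construction and uniform control of the \emph{discrete} MFE: one must show the discrete modulation functions stay bounded with bounded difference quotients over the full $\bigo(1/\eps)$ number of steps, handle the mismatch between the continuous phase $\phi$ and the stroboscopic discrete phase $\Phi_n$, and verify that no hidden resonance beyond $k=1,2$ spoils the estimates, so that truncating at order $N$ is legitimate and the defects of the discrete expansion are genuinely $\bigo(\eps^{N})$. Controlling this near-resonant inversion, together with proving that the scheme introduces no spurious secular drift in the $k=0$ (guiding-centre and parallel) component, is the technical core on which the whole theorem rests.
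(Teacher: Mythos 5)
Your proposal follows essentially the same route as the paper: construct the modulated Fourier expansion of the exact solution (with $z^{\pm1}$ position amplitudes of size $\bigo(\eps)$ and hence $\bigo(1)$ velocity amplitudes), construct the discrete modulated Fourier expansion of the splitting scheme using the non-resonance condition to invert the $\sinc$-type symbols, and compare the two expansions mode by mode to read off the $\bigo(\eps)$ bounds for $x$ and $v_\parallel$ and the $\bigo(1)$ bound for the full velocity. The only notable difference is that the paper imports the exact-solution expansion wholesale from Hairer--Lubich rather than rederiving it, and the technical points you flag (uniform control of the discrete modulation functions, the near-resonant inversion) are exactly where the paper's Lemma on the numerical expansion does the work.
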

The proof will be given  in the rest part of this section by using   the  technology of modulated Fourier expansion \cite{ICM,Hairer00,Hairer16,hairer2006}. The following key points will be analysed in sequel.
\begin{itemize}
 \item Section \ref{ss:mfe e} presents the modulated Fourier expansion  of the exact solution.

\item Section \ref{ss:mfe n} derives the modulated Fourier expansion of the numerical solution from S1-AVF.

\item Section \ref{ss:proof} proves the result for S1-AVF by comparing the modulated Fourier expansion of the exact solution with that of S1-AVF.

\item Section \ref{ss:proof o} discusses how to modify the proof for S1-AV and S1-VP.
\end{itemize}
Since the modulated Fourier expansion has been used for analysis of charged-particle dynamics in \cite{Hairer16,Hairer2018,lubich19}  , we focus on the novel modifications and the main differences in the proof. 

\begin{rem}
We remark that the result of Theorem \ref{them:Convergence} also holds for the maximal ordering scaling case. However, from the proof below, it will be seen that the error bound $\bigo(h)$ of the presented schemes cannot be derived by modulated Fourier expansion unless the restriction of $h$ is strengthened from $h \leq C \eps$ to $h =\bigo (\eps)$.


\end{rem}

\subsection{Modulated Fourier
expansion of exact solution}\label{ss:mfe e}
Following \cite{Hairer2018,lubich19}, denote  the eigenvalues and the corresponding normalized  eigenvectors of the linear map $v\mapsto v\times B(x)$ by
$$\lambda_1 = \iu |B(x)|,\quad   \lambda_0 =0,\quad   \lambda_{-1}=-\iu |B(x)|,$$
and  $$v_1(x), \quad v_0(x),\quad v_{-1}(x),$$
respectively. Letting $P_j(x)=v_j(x)v_j(x)^*$ yields the orthogonal projections onto the eigenspaces, which satisfy $P_{-1}(x)+ P_{0}(x)+P_{1}(x)=I$ and
 \begin{align*}
P_{\pm1}(x)\widehat B(x)\alpha=(\pm\iu |B(x)|)P_{\pm1}(x)\alpha,\quad
P_{0}(x)\widehat B(x)\alpha=\mathbf{0},\end{align*}
for any vector $\alpha\in \mathbb{R}^3$.

\begin{lem} \label{thm:mfe-exact}
(See \cite{Hairer2018})
Under the assumptions a) and b) given in Theorem  \ref{them:Convergence}, the exact solution  $x(t)$ of  \eqref{charged-particle sts} can be expressed in the following  modulated Fourier expansion
\begin{equation}\label{mfe-remainder}
x(t) = \!\!\sum_{|k|\le N}\!\! z^k (t)\, \e^{\iu k \phi (t)/\eps}  + R_N (t),
\end{equation}
with an arbitrary truncation index $N\ge 1$ and the phase function $\phi (t)$ which satisfies  {$\dot \phi (t) = | B (z^0(t))|=\bigo(1)$}. Here $z^0 (t)$ describes the motion of the gyrocenter (guiding center) and all  the coefficient functions $z^k (t)$ can be rewritten  in the time-dependent basis $v_j\bigl(z^0(t)\bigr)$:
\begin{equation*}
 z^k = z_1^k + z_0^k + z_{-1}^k ,\qquad z_j^k(t) =P_j\bigl(z^0(t)\bigr)z^k(t), \ \ \ \textmd{for}\ \ j=-1,0,1.
\end{equation*}
This modulated Fourier expansion has the following properties.

(a) The function $z^0 $ satisfies the differential equations
\begin{subequations}
\begin{align}
\ddot z_0^0 &=  P_0(z^0) E(z^0)  +  2\, \dot P_0(z^0) \dot z^0 + \ddot P_0(z^0) z^0 + \bigo (\eps ) , \label{eq-z00}\\
\dot z_{\pm 1}^0 & =   \dot P_{\pm 1}(z^0)  z^0 + \bigo (\eps),\label{eq-zpm0}
\end{align}
\end{subequations}
and $z^{\pm 1}$ are bounded by
\begin{align}\label{eq-zpmpm}
{z_{\pm 1}^{\pm 1} =\bigo(\eps)},\ \ \ z_{j}^{k}=\bigo(\eps^2),\quad \mbox{for}\quad k=\pm1,\ j\neq k.
\end{align}
Moreover, it is true that
\begin{align} \label{eq-z0t} \dot z^0 \times B(z^0) = \bigo(\eps).\end{align}

(b) Under the condition that $\phi (0)=0$, the initial values for the differential equations  \eqref{eq-z00}-\eqref{eq-zpm0} are determined  by
\begin{align*}
z^0(0) &= x(0) + \frac{\dot x(0) \times \frac{B(x(0))}{\eps}}{|\frac{B(x(0))}{\eps}|^2} + \bigo(\eps^2)= x(0) +\bigo(\eps),
\\
 \dot z_0^0(0) &=  P_0(x(0)) \dot x(0)+ \dot P_0(x(0))  x(0)+ \bigo (\eps ) .
\end{align*}

(c) The coefficient function $z^0(t)$ together with its derivatives (up to order $N$) is bounded as $z^0 = \bigo (1)$ and for other $z^k(t)$  together with their derivatives (up to order $N$), they  are bounded as {$z^k = \bigo (\eps^{|k|})\ \textmd{with}\ \abs{k}>1.$} Moreover, these functions are unique up to $\bigo (\eps^{N+1})$.

(d) The bounds of the remainder term $R_N(t)$ and its derivative  are
\begin{equation*}
R_N(t) = \bigo \left(t^2 \eps^N\right), \quad \dot R_N(t) = \bigo \left(t \eps^N\right),\quad 0\le t\le T.
\end{equation*}

The above constants symbolised by the \mbox{$\bigo$-notation} depend on~$N, T, M$ and on the bounds of derivatives of~$B$ and~$E$, but they are independent of $\eps$ and $t$ with $0\le t \le T$.
\end{lem}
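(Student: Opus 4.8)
The plan is to construct the expansion \eqref{mfe-remainder} by the standard modulated Fourier ansatz and to verify the stated properties by matching powers of $\eps$ in the defect of the construction. Since the statement is essentially that of \cite{Hairer2018}, I would organize the argument into three stages: a formal determination of the coefficient functions $z^k$ and the phase $\phi$, a reading-off of the orders and slow equations in (a)--(c), and a rigorous remainder estimate for (d).

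First I would insert the ansatz $x(t)=\sum_{k} z^k(t)\,\e^{\iu k\phi(t)/\eps}$ into the second-order form $\ddot x = \dot x\times B(x)/\eps + E(x)$ of \eqref{charged-particle sts}, Taylor-expand $B(x)$ and $E(x)$ about the gyrocenter $z^0$, and collect the coefficient of each formal exponential $\e^{\iu k\phi/\eps}$. Writing $L(x)\colon v\mapsto v\times B(x)$, whose spectral data $\lambda_j=\iu j|B|$ and $P_j$ are recalled just before the lemma, the dominant $\eps^{-2}$ balance in the $k$-th mode, projected with $P_j(z^0)$, reads $\big[-(k\dot\phi)^2 + k j\,\dot\phi\,|B(z^0)|\big]\,z_j^k = \bigo(\eps)$. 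For $k=0$ this is vacuous, leaving $z^0$ as the free slow variable; for $k=\pm1$ with $j=k$ it forces the phase relation $\dot\phi=|B(z^0)|$, exactly the claimed $\dot\phi(t)=|B(z^0(t))|=\bigo(1)$. For every remaining pair $(k,j)$ the bracket is bounded away from zero because $|B|$ stays positive on the compact set $K$, so those $z_j^k$ are fixed \emph{algebraically} and pushed to higher order; tracking the orders yields $z_{\pm1}^{\pm1}=\bigo(\eps)$, $z_j^k=\bigo(\eps^2)$ for $j\neq k$, and $z^k=\bigo(\eps^{|k|})$ for $|k|\ge2$, i.e.\ (a)--(c).

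Second, I would extract the slow equations of motion. The $\eps^0$ balance in the zero mode, after using $P_{-1}+P_0+P_1=I$ together with $P_{\pm1}\widehat B\alpha=\lambda_{\pm1}P_{\pm1}\alpha$ and $P_0\widehat B\alpha=0$, produces the second-order equation \eqref{eq-z00} for $z_0^0$ and the first-order drift equations \eqref{eq-zpm0} for $z_{\pm1}^0$, the $\dot P_0,\ddot P_0$ terms arising from differentiating the $z^0$-dependent projections; the identity \eqref{eq-z0t} then follows by applying $L(z^0)$ to $\dot z^0$ and reusing the leading balance. The initial data in (b) come from evaluating the ansatz and its derivative at $t=0$, imposing $\phi(0)=0$, and solving for $z^0(0)$ and $\dot z_0^0(0)$; the $\bigo(\eps)$ correction is precisely the fast gyration radius $\dot x(0)\times(B/\eps)/|B/\eps|^2$.

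The hard part, as always for modulated Fourier expansions, will be the remainder estimate (d). I would fix $N$, substitute the finite sum $x_N(t)=\sum_{|k|\le N} z^k(t)\e^{\iu k\phi(t)/\eps}$ into \eqref{charged-particle sts}, and check that the construction makes the defect $\bigo(\eps^N)$ uniformly on $[0,T]$. The equation for $R_N=x-x_N$ then carries the stiff rotation $\dot R_N\times B/\eps$ in its homogeneous part; since that rotation preserves the Euclidean norm of $\dot R_N$, the error dynamics are non-expansive and no $\e^{t/\eps}$ growth appears, so a Gronwall argument over the $\bigo(1)$ interval $[0,T]$ gives $R_N=\bigo(t^2\eps^N)$ and $\dot R_N=\bigo(t\eps^N)$, the powers of $t$ reflecting the double integration of the defect. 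The genuinely delicate points are the uniform algebraic solvability at each order, guaranteed by $|B|$ bounded below on $K$ so that every bracket $-(k\dot\phi)^2+kj\dot\phi|B|$ is invertible for $(k,j)\ne(\pm1,\pm1)$, and the propagation of $\eps$-uniform bounds on all derivatives of the $z^k$ entering the defect; these are exactly the estimates carried out in \cite{Hairer2018}, to which I would defer for the lengthy but routine bookkeeping.
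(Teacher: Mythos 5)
Your formal construction matches what the paper does: the paper's entire proof is a one-line deferral to Section~4 of \cite{Hairer2018}, and your ansatz-and-matching argument is a faithful reconstruction of that reference. The $\eps^{-2}$ balance $\bigl[-(k\dot\phi)^2+kj\,\dot\phi\,|B(z^0)|\bigr]z_j^k=\bigo(\eps)$, the identification of $(k,j)=(\pm1,\pm1)$ as the only resonant pair forcing $\dot\phi=|B(z^0)|$, the algebraic determination of all other $z_j^k$ under $|B|\ge c>0$ on $K$, the extraction of \eqref{eq-z00}--\eqref{eq-zpm0} from the $P_j$-projected zero mode, and the reading-off of the initial data are all correct and are exactly the mechanism of the cited proof.

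There is, however, one genuine gap in your sketch of part (d). Writing the error equation for $R_N=x-x_N$, the magnetic term contributes
\begin{equation*}
\frac{1}{\eps}\,\dot x\times B(x)-\frac{1}{\eps}\,\dot x_N\times B(x_N)
=\frac{1}{\eps}\,\dot R_N\times B(x)+\frac{1}{\eps}\,\dot x_N\times\bigl(B(x)-B(x_N)\bigr),
\end{equation*}
and only the first piece is the norm-preserving rotation you invoke. The second piece is of size $\bigo(|R_N|/\eps)$ for a position-dependent $B$, so the error dynamics are \emph{not} non-expansive: a naive energy/Gronwall estimate on the system $|R_N|'\lesssim|\dot R_N|$, $|\dot R_N|'\lesssim|R_N|/\eps+\eps^N$ only controls growth on a timescale $\bigo(\sqrt{\eps})$, not on the full $\bigo(1)$ interval $[0,T]$. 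Closing this loop --- e.g.\ by a variation-of-constants representation with the rotation propagator and exploiting that the $\eps^{-1}$ coupling is filtered/averaged by the fast rotation, as is done in \cite{Hairer2018} --- is precisely the non-routine content of the remainder estimate, and it is not among the ``delicate points'' you list. Since both you and the paper ultimately defer to \cite{Hairer2018}, this does not invalidate the lemma, but the stated justification for $R_N=\bigo(t^2\eps^N)$ would fail as written whenever $B$ is not constant.
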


\begin{proof} These results are immediately obtained by considering the section 4 of  \cite{Hairer2018}.

\end{proof}

\subsection{Modulated Fourier
expansion of S1-AVF}\label{ss:mfe n}
In this subsection, we consider  the {\it modulated Fourier expansion} of S1-AVF.
\begin{lem}\label{thm:mfe-num}
Suppose that  the numerical solution $\{ x^n \}$ of the S1-AVF stays in a compact set~$K$ for $0\le nh \le T$. For a fixed, but arbitrary truncation index $N\ge 1$, the non-resonance condition is required
\begin{equation}\label{non-res}
\left|\sinc\left({ \frac{1}{2}} k\eta   |B(x^n)|\right)\right| \ge c >0 \qquad\text{for } k=1,\dots,N+1,
\end{equation}
where $\eta = h/\eps$ with the bound $C$. Then, $x^n$ admits the following modulated Fourier expansion at $t=nh$
\begin{equation}\label{mfe-formal-num}
x^n = \!\!\sum_{|k|\le N}\!\! \tilde{z}^k (t)\, \e^{\iu k \tilde{\phi} (t)/\eps}  + \tilde{R}_N (t) ,
\end{equation}
where the phase function $\tilde{\phi}$ is given by  \begin{equation}\label{phi num}\dot{\tilde{\phi}} (t) = | B (\tilde{z}^0(t) ) |.\end{equation}

(a) The functions $\tilde{z}_0^0, \tilde{z}_{\pm 1}^0$ satisfy the following differential equations
\begin{subequations}
\begin{align}
  \ddot{\tilde{z}}_0^0 &=  P_0(\tilde{z}^0) E(\tilde{z}^0) +  2\, \dot P_0(\tilde{z}^0)  \dot{ \tilde{z}}^0 + \ddot P_0(\tilde{z}^0) \tilde{z}^0 + \bigo (\eps ) , \label{eq-z00-num}\\
  \dot{\tilde{z}}_{\pm 1}^0 & =   \dot P_{\pm 1}(\tilde{z}^0)  \tilde{z}^0 + \bigo (\eps), \label{eq-zpm0-num}
\end{align}
\end{subequations}
and $\tilde{z}^{\pm 1}$ are bounded by
\begin{align}
 \tilde{z}_{\pm 1}^{\pm 1} =
 \bigo(\eps),\ \ \  \tilde{z}_{j}^{k}=  \bigo(\eps^2),\quad \mbox{for}\quad k=\pm1,\ j\neq k.\label{eq-zpmpm-num}
\end{align}

(b) For the differential equations  \eqref{eq-z00-num}-\eqref{eq-zpm0-num},  their initial values are determined by
\begin{equation}\label{initial-val-all}
\begin{array}{rcl}
\tilde{z}_0^0(0) &=&  P_0(x(0)) x(0)+ {\bigo (\eps)} , \\[1mm]
 \tilde{z}_{\pm 1}^0 (0) & =&    \displaystyle P_{\pm 1}(x(0))   x(0) + \bigo (\eps), \\[1mm]
 \dot{\tilde{z}}_0^0(0) &=&  P_0(x(0)) \dot x(0)+ \dot P_0(x(0))  x(0)+ \bigo (\eps ) .
  \end{array}
\end{equation}

(c)$\,\&\,$(d)  The results  given in (c) and (d) of Lemma~\ref{thm:mfe-exact} are still true for the coefficient functions $\tilde{z}^k(t)$ and for the remainder term $ \tilde{R}_N$, respectively.

The constants symbolised by the \mbox{$\bigo$-notation} are independent of $\eps$ and $n$ with $0\le nh \le T$, but they depend on~$N,C,M,T$ and on bounds of derivatives of~$B$ and~$E$.
\end{lem}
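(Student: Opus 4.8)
The plan is to construct the modulated Fourier expansion of the S1-AVF numerical solution by mimicking the derivation for the exact solution (Lemma~\ref{thm:mfe-exact}), but working with the \emph{discrete} recursion from the scheme rather than the continuous ODE. First I would insert the ansatz \eqref{mfe-formal-num} into the one-step map of S1-AVF \eqref{TSM0}. The key technical device is to treat $x^{n+1}$ and $x^{n-1}$ (equivalently the evaluation at $t\pm h$) through the modulated ansatz, so that each coefficient $\tilde z^k(t)\e^{\iu k\tilde\phi(t)/\eps}$ is advanced by the factor $\e^{\iu k\tilde\phi(t\pm h)/\eps}$. Expanding $\tilde\phi(t\pm h)=\tilde\phi(t)\pm h\dot{\tilde\phi}(t)+\bigo(h^2)$ and using the phase equation \eqref{phi num} produces, for each frequency $k$, a factor $\e^{\pm\iu k h|B(\tilde z^0)|/\eps}=\e^{\pm\iu k\eta|B(\tilde z^0)|}$. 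Collecting terms at each frequency $\e^{\iu k\tilde\phi/\eps}$ and Taylor-expanding the smooth functions $\e^{h\widehat B/\eps}$, $E$, and the AVF integral in powers of $h=\eta\eps$, I would obtain a hierarchy of \emph{formal} modulation equations for the $\tilde z^k_j$.

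The crucial step is the separation of scales within each frequency block using the projections $P_j(\tilde z^0)$. Applying $P_{\pm1}(\tilde z^0)$ to the $k=0$ equation isolates the algebraic relations that, after dividing by the non-vanishing quantity $1-\e^{\pm\iu\eta|B|}$ (nonzero precisely by the non-resonance condition \eqref{non-res} with $k=1$), yield the first-order ODE \eqref{eq-zpm0-num} for $\dot{\tilde z}^0_{\pm1}$; the factor $\sinc(\tfrac12\eta|B|)$ appearing in the denominator is exactly what the hypothesis $|\sinc(\tfrac12 k\eta|B(x^n)|)|\ge c$ controls. Applying $P_0(\tilde z^0)$ to the same block gives the second-order equation \eqref{eq-z00-num} for the guiding-center component $\tilde z^0_0$, where the discrete second difference of $\tilde z^0\e^{\iu\cdot}$ reproduces $\ddot{\tilde z}^0_0$ plus the curvature correction terms $2\dot P_0\dot{\tilde z}^0+\ddot P_0\tilde z^0$ up to $\bigo(\eps)$. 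For $k=\pm1$ the leading balance forces $\tilde z^{\pm1}_{\pm1}=\bigo(\eps)$, and projecting onto the off-diagonal eigenspaces $j\ne k$ gives the sharper $\bigo(\eps^2)$ bound in \eqref{eq-zpmpm-num}, each division by $1-\e^{\iu(k\mp1)\eta|B|}$ being legitimized by the non-resonance condition for $k=2$. The initial values \eqref{initial-val-all} are then fixed by matching the ansatz at $n=0$ to $x^0=x_0,\ v^0=v_0$, again using the projections to split the components.

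For parts (c) and (d) I would follow the standard modulated-Fourier bootstrap: the size estimates $\tilde z^k=\bigo(\eps^{|k|})$ for $|k|>1$ follow inductively from the hierarchy, since producing a frequency-$k$ term requires at least $|k|$ couplings each contributing a power of $\eps$; uniqueness up to $\bigo(\eps^{N+1})$ and the remainder bound $\tilde R_N=\bigo(t^2\eps^N)$ come from truncating the expansion at index $N$, inserting the truncated series back into the scheme, and estimating the defect via a discrete Gronwall argument over $0\le nh\le T$, exactly parallel to Lemma~\ref{thm:mfe-exact}(d). I expect the main obstacle to be the careful bookkeeping of the discrete operator $\e^{h\widehat B(\eps\tilde z^0)/\eps}$ acting on the time-dependent eigenbasis $v_j(\tilde z^0(t))$: because $\tilde z^0$ itself varies slowly, the projections $P_j(\tilde z^0)$ do not commute with the shift $t\mapsto t+h$, so the derivatives $\dot P_j$ enter and must be tracked to the correct order to recover \eqref{eq-z00-num}-\eqref{eq-zpm0-num} rather than spurious $\bigo(1)$ terms. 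Controlling this non-commutativity uniformly in $\eps$, while keeping the denominators bounded away from zero through \eqref{non-res}, is where the essential work lies; everything else reduces to matching the discrete construction term-by-term against the continuous one in Lemma~\ref{thm:mfe-exact}.
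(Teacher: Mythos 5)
Your proposal follows essentially the same route as the paper's proof: eliminating $v^n$ to obtain a two-step recursion in $x$ alone (the paper packages this as the operator $\mathcal{L}(hD)=(\e^{hD}-I)-\e^{\eta\widehat B(\tilde x)}(I-\e^{-hD})$ acting on the ansatz), comparing coefficients of $\e^{\iu k\tilde\phi/\eps}$, projecting with $P_j(\tilde z^0)$ so that the non-resonance condition controls the denominators $1-\e^{\pm\iu k\eta|B|}$, matching initial data via the discrete velocity relation, and closing parts (c)--(d) by the standard defect-plus-Gronwall bootstrap. The points you flag as the essential difficulties (the non-commutativity of $P_j(\tilde z^0)$ with the time shift producing the $\dot P_j$, $\ddot P_j$ terms, and the role of the $\sinc$ lower bound) are exactly where the paper's computation with the coefficients $c_l^k,d_l^k$ does the work.
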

\begin{proof}
(a)   Let $\tilde{x}(t)= \!\!\sum_{|k|\le N} \tilde{z}^k (t)\, \e^{\iu k \tilde{\phi} (t)/\eps}$ and define the operators
\begin{equation*}
\begin{split}
&\mathcal{L}(hD)=
 \left(\e^{hD}-I\right)- \e^{\eta \widehat B(\tilde{x}(t))}\left(I-\e^{-hD}\right),\quad \mathcal{L}_1(hD,\tau)=
(1-\tau)\e^{hD}+\tau I ,
\end{split}
\end{equation*}where $D$ is the differential operator (see \cite{Hairer16}).
The operator $\mathcal{L}(hD)$ satisfies
\begin{equation}\label{prop L}
\frac{\mathcal{L}(hD)}{h^2} \big( \tilde{z}^k (t)\, \e^{\iu k \tilde{\phi} (t)/\eps}\big)= \e^{\iu k \tilde{\phi} (t)/\eps} \sum_{l\ge 0}
\eps^{l-2} \left[\frac{c_l^k(t)}{\eta}+\frac{d_l^k(t)}{2} +\e^{\eta \widehat B(\tilde{x}(t))}\left(\frac{d_l^k(t)}{2}-\frac{c_l^k(t)}{\eta}\right) \right] \frac{\d^l}{\d t^l} \tilde{z}^k(t) ,
\end{equation}
where some leading coefficients are given by
\begin{equation}\label{coeff-c-d}
\begin{array}{rcl}
c_{2j}^0(t) &=&0, \quad   c_{2j+1}^0(t) = \eta^{2j} /(2j+1)!,\\[2mm]
c_0^k(t) &=& \displaystyle \frac{\iu}{\eta} \sin \left( k\eta \dot \phi (t)\right) -
\eps \frac{k\eta}2  \sin \bigl( k\eta \dot \phi (t)\bigr) \ddot \phi (t)+ \bigo (\eps^2 ), \\ [2mm]
c_1^k(t) &=& \displaystyle \cos \left( k\eta \dot \phi (t)\right) + \bigo (\eps ) ,\\ [2mm]
d_0^0(t)&=&0,\quad d_{2j}^0 (t)= 2\eta^{2j-2}/(2j)!,\quad  d_{2j+1}^0(t) = 0,\\[2mm]
d_0^k(t) &=& \displaystyle -\frac{4}{\eta^2} \sin^2 
\left( \frac{k\eta \dot \phi (t)}2\right) +
\iu\, \eps\, k \cos \left( k\eta \dot \phi (t)\right)  \ddot \phi (t) + \bigo (\eps^2 ), \\[2mm]
d_1^k(t) &=&\displaystyle  \frac {2\,\iu}\eta \sin \left( k\eta \dot \phi (t)\right) + \bigo (\eps ) .
\end{array}
\end{equation}
We insert $\tilde{x}(t)$ into the scheme of S1-AVF and then obtain 
\begin{equation}\label{xxx-num}
\frac{\mathcal{L}(hD)}{h^2} \tilde{x}(t)=
\frac{1}{2}
\int_{0}^{1}E\left(  \mathcal{L}_1(hD,\tau) \tilde{x}(t)\right)d\tau+\frac{1}{2}\e^{\eta \widehat B(\tilde{x}(t))}
\int_{0}^{1}E\left(  \mathcal{L}_1(-hD,\tau) \tilde{x}(t)\right)d\tau.
\end{equation}
Rewriting $\mathcal{L}(hD) \tilde{x}(t)$ and  $\mathcal{L}_1(hD,\tau) \tilde{x}(t)$ in the series of $\e^{\iu k \tilde{\phi} (t)/\eps}$, expanding the nonlinearities around $\tilde{z}^0$, and comparing the coefficients of $\e^{\iu k \tilde{\phi} (t)/\eps}$ yields the construction of the coefficients functions $\tilde{z}^k$. For deriving the first-order convergence, we only need to explicitly present the results of $\tilde{z}^0$ and $\tilde{z}^{\pm1}$.

For $k=0$ we obtain
\begin{equation*}
\frac{\mathcal{L}(hD)}{h^2} \tilde{z}^0 =
 \displaystyle
\frac{1}{2}
\int_{0}^{1}E\left(  \mathcal{L}_1(hD,\tau) \tilde{z}^0\right)d\tau+\frac{1}{2}\e^{\eta \widehat B(\tilde{z}^0)}
\int_{0}^{1}E\left(\mathcal{L}_1(-hD,\tau) \tilde{z}^0\right)d\tau
+ \bigo (\eps).
\end{equation*}
Then by the property \eqref{prop L} of  $\mathcal{L}(hD)$, it is arrived that
\begin{equation*}
\begin{split}
 P_0(\tilde{z}^0)\frac{\mathcal{L}(hD)}{h^2} \tilde{z}^0 &= P_0(\tilde{z}^0)\ddot{\tilde{z}}^0+ \bigo (h^2)= P_0(\tilde{z}^0)E(\tilde{z}^0)
+ \bigo (\eps),\\
 P_{\pm1}(\tilde{z}^0)\frac{\mathcal{L}(hD)}{h^2} \tilde{z}^0 &=\frac{1}{\varepsilon} \frac{2}{\eta} \sin^2 \left(\frac{\eta}{2}\left| B\left(\tilde{z}^0\right)\right| \right)\left[ 1-\iu\cot\left(\frac{\eta}{2}\left| B \left(\tilde{z}^0\right)\right| \right)\right]P_{\pm1}(\tilde{z}^0)\dot{\tilde{z}}^0+ \bigo (1)\\
 &= P_{\pm1}(\tilde{z}^0)\frac{1}{2}\left(1+e^{\pm \iu  \eta \left| B \left(\tilde{z}^0\right)\right| }\right)E(\tilde{z}^0)+ \bigo (\eps),
\end{split}
\end{equation*}
which gives  \eqref{eq-z00-num} and \eqref{eq-zpm0-num}, respectively.

For $k=\pm1 $ and after multiplication \eqref{xxx-num} with $P_{0 }(\tilde{z}^0)$,  we  look for
the dominant term of  $\frac{\e^{hD}-2I+\e^{-hD}}{h^2}  P_{0}(\tilde{z}^0)\tilde{z}^{\pm 1}\e^{\pm\iu   \tilde{\phi} (t)/\eps},$ which is
$$-4 \sin^2 \left(\frac{\eta}{2}\left| B\left(\tilde{z}^0\right)\right| \right)P_{0}(\tilde{z}^0)\tilde{z}^{\pm 1}\e^{\pm\iu   \tilde{\phi} (t)/\eps}.$$
Multiplying \eqref{xxx-num} with $P_{\pm1 }(\tilde{z}^0)$ and using  \eqref{phi num}, we note that the $\eps^{-2}$ term is annihilated in
$\frac{\left(\e^{hD}-I\right)- \e^{\pm \iu\eta \left| B  (\tilde{z}^0  )\right| }\left(I-\e^{-hD}\right)}{h^2} P_{\pm 1}(\tilde{z}^0)\tilde{z}^{\pm 1}\e^{\pm  \iu   \tilde{\phi} (t)/\eps}.$ The dominant term of this expression becomes the following $\eps^{-1}$ one:
$$ \frac{1}{\varepsilon \eta} \left(e^{\pm\iu  \eta\left| B(\tilde{z}^0)\right| }-1\right)P_{\pm 1}(\tilde{z}^0)\dot{\tilde{z}}^{\pm 1}\e^{\pm  \iu   \tilde{\phi} (t)/\eps}.$$
We extract these two dominant terms from \eqref{xxx-num} and then respectively get the equations of $\tilde{z}_0^{\pm 1}$ and $\dot{\tilde{z}}_{\pm 1}^{\pm 1}$.
These results as well as the initial value of $\tilde{z}_{\pm 1}^{\pm 1}$ given by \eqref{ini zpm1} yield \eqref{eq-zpmpm-num}.

%

(b)  As a consequence of \eqref{mfe-formal-num}, it is obtained that
\begin{equation*}
x(0) = \tilde{z}^0(0) + \bigl( \tilde{z}^1(0) + \tilde{z}^{-1}(0)\bigr) + \bigo (\eps^2)= \tilde{z}^0(0) + \bigo (\eps).
\end{equation*}
From  
\begin{align} \nonumber
 v^n=&\frac{1}{h}\mathcal{L}_2(hD)x^n+\frac{h}{2}\int_{0}^1
E\left(\rho x^{n-1}+(1-\rho)x^{n}\right) d\rho=\sum_{|k|\le N}\!\! \frac{1}{h}\mathcal{L}_2(hD)\big(\tilde{z}^k (t)\, \e^{\iu k \tilde{\phi} (t)/\eps}\big)+ \bigo(\eps)\\
 = &\!\!\sum_{|k|\le N}\!\!  \e^{\iu k \tilde{\phi} (t)/\eps} \sum_{l\ge 0}
\eps^{l-1} \left(c_l^k(t)-\frac{1}{2}\eta d_l^k(t)\big) \right) \frac{\d^l}{\d t^l} \tilde{z}^k(t)+ \bigo(\eps),\label{vn}
\end{align}
with $\mathcal{L}_2(hD)=1- \e^{-hD}$, it follows that
\begin{align*}
P_0 \bigr( x(0) \bigr)\dot x(0) &=P_0 \left( x(0) \right)\frac{1}{h}\mathcal{L}_2(hD) \tilde{z}^0(0)+ \bigo (\eps)
\\
& = \dot{\tilde{z}}^0_0(0) -\dot P_0(x(0))  \tilde{z}^0(0)+ \bigo(\eps).
\end{align*}
The initial values \eqref{initial-val-all} are determined by these two formulae. The multiplication \eqref{vn} at $t=0$  with $P_{\pm 1} \bigr( x(0) \bigr)$ gives the initial value \begin{align}\label{ini zpm1}
\tilde{z}_{\pm 1}^{\pm 1}(0)=\bigo(\eps).
\end{align}

(c) For $|k|>1$, from \eqref{xxx-num}, the algebraic relations for  $\tilde{z}^{k}$ can be obtained and based on which, the results of part (c)  can be
derived.

(d) For the part (d),  we do not present the details of the proof  since they can be derived by similar arguments as in \cite{Hairer00,Hairer16,Hairer2018,lubich19}.

\end{proof}
\subsection{Proof for S1-AVF}\label{ss:proof}
From the above two lemmas, it is shown that the coefficient functions  of the modulated Fourier expansions of the exact solution and of S1-AVF
satisfy
$$\abs{z^0(t)-\tilde{z}^0(t)}\lesssim \eps,\quad   \abs{\dot{z}^0(t)-\dot{\tilde{z}}^0(t)}\lesssim \eps,\quad \abs{z^{k}(t)-\tilde{z}^{k}(t)}\lesssim  \eps^{|k|},\quad \textmd{with}\quad k\neq0.$$
The phase functions $\phi$ and $\tilde{\phi}$ differ by
$$\abs{\phi(t)-\tilde{\phi}(t)}\lesssim \eps.$$
 These results  lead to $$\abs{x(t_n)-x^n}\lesssim \eps,$$ which shows the $\bigo (\eps)$ error bound for the positions as presented in Theorem~\ref{them:Convergence}.

For  the error bound for the velocities, we need to study the modulated Fourier expansions of the velocity of the exact solution
and of S1-AVF. By Lemma~\ref{thm:mfe-exact}, the velocity of the exact solution is given by
\begin{equation}\label{v-mfe}
v(t)=\dot x(t) = \dot z^0(t) + \frac{\iu\dot\phi(t)}\eps\, \left(z_1^1(t) \, \e^{\iu\phi(t)/\eps} - z_{-1}^{-1}(t) \, \e^{-\iu\phi(t)/\eps}\right) + \bigo(\eps),
\end{equation}
 which implies
$P_{0}(z^0)v(t)=P_{0}(z^0)\dot z^0(t)+\bigo(\eps).$
The modulated Fourier expansion of $v^n$ obtained by S1-AVF satisfies \eqref{vn}.
According to
\begin{align*}
 P_{0}(\tilde{z}^0(t_n))v^n &= P_{0}(\tilde{z}^0(t_n)) \dot{\tilde{z}}^0(t_n)+
 \bigo (\eps),\\
  P_{\pm1}(\tilde{z}^0(t_n))v^n& = P_{\pm1}(\tilde{z}^0(t_n)) \dot{\tilde{z}}^0(t_n)+ \left[\frac{\iu}{\eta}   \sin\left(\pm\eta \dot{\tilde{\phi}}(t_n)\right) +\frac{2}{\eta} \sin^2\left(\frac{1}{2}\eta \dot{\tilde{\phi}}(t_n)\right)\right] \frac{\tilde{z}_{\pm1}^{\pm1}(t_n)}{\varepsilon}+ \bigo (\eps),
\end{align*}
and the fact that $v_0(x)$ is collinear to $B(x)$, we obtain
$$
\abs{v^n - v(t_n)}\lesssim 1,\quad \mbox{but}\quad \abs{v^n_\parallel - v_\parallel(t_n)}\lesssim \eps.
$$

\subsection{Proof for S1-SV and S1-VP}\label{ss:proof o}
For the methods S1-SV and S1-VP, the equation \eqref{xxx-num} becomes
\begin{equation*}
\frac{\mathcal{L}(hD)}{h^2} \tilde{x}(t)= \displaystyle
\frac{1}{2}  \left(\e^{\frac{h}{\eps}\widehat B (\tilde{x}(t))}+I\right)E(\tilde{x}(t)),
\end{equation*}
and
\begin{equation*}
\frac{\mathcal{L}(hD)}{h^2} \tilde{x}(t)= \displaystyle
 \varphi_1\left(\frac{h}{\eps}\widehat B (\tilde{x}(t))\right)  E(\tilde{x}(t)),
\end{equation*}
respectively. By using this result as well as the relationship between $x^n$ and $v^n$ determined by each scheme, and by some adaptations of the proofs of the above two subsections, the first-order convergence in $x$ and $v_\parallel$ of  S1-SV and S1-VP remains true. Here we omit the details for brevity.

\section{Numerical result} \label{sec:num}
In this section, we present numerical results of the presented Lie-Trotter type schemes. We first conduct numerical experiments to show the accuracy of the schemes under different $\eps\in(0,1)$ and then we address their efficiency and conservation property.

To test the convergence result of the splitting schemes, we solve the CPD till $T=t_n=1$ numerically and compute the relative error:
\begin{equation}\label{err}
  error:=\frac{\abs{x^n-x(t_n)}}{\abs{x(t_n)}}
  +\frac{\abs{v^n_\parallel-v_\parallel(t_n)}}{\abs{v_\parallel(t_n)}}.
\end{equation}
The reference solution is obtained by using ``ode45" of MATLAB. For the  implicit scheme S1-AVF, we apply  the two-point Gauss-Legendre's
rule to the integral in \eqref{TSM0} and use standard fixed point iteration as nonlinear solver in the practical computations. We
set $10^{-16}$ as the error tolerance and $1000$ as the maximum number of each iteration.

\vskip2mm\noindent\textbf{Problem 1. (Maximal ordering scaling)} The first illustrative numerical experiment is devoted to the charged-particle motion
in a magnetic field with the maximal ordering scaling 
$$\frac{1}{\eps}B(\eps x) =\frac{1}{\eps}\begin{pmatrix}
                       \cos(\eps x_2)\\ 1+\sin(\eps x_3)\\ \cos(\eps x_1)
                      \end{pmatrix}+\begin{pmatrix}
                       -x_1\\ 0\\ x_3
                      \end{pmatrix},$$ and the electric field $E(x)=-\nabla_x U(x)$ with the potential
$U(x)=\frac{1}{\sqrt{x_1^2+x_2^2}}.$ We choose the initial values  as   $x(0)=(\frac 1 3,\frac 1 4,\frac 1
2)^{\intercal}$ and $v(0)=(\frac 2 5,\frac 2 3,1)^{\intercal}$.  The errors (\ref{err}) of the three Lie-Trotter type splitting schemes, i.e. S1-AVF (\ref{TSM0}), S1-SV (\ref{TSM1}) and S1-VP (\ref{vps1}) at $T=1$ are shown in Figure \ref{fig11}.

Clearly from the numerical results in Figure \ref{fig11}, we can see that

1) The three splitting schemes all show the uniform first order accuracy for the varying $\eps\in(0,1)$ in the position $x$ and $v_\parallel$. This verifies the theoretical result in Theorem \ref{Convergence2} and indicates that the error estimate is optimal. In addition, the choice of  the step size in this problem which is not the integer partition of the period illustrates Remark \ref{rk: step}.

2) The proposed S1-AVF or S1-SV are more accurate than S1-VP, and the errors of S1-AVF and S1-SV are very close.

 \begin{figure}[t!]
$$\begin{array}{cc}
\psfig{figure=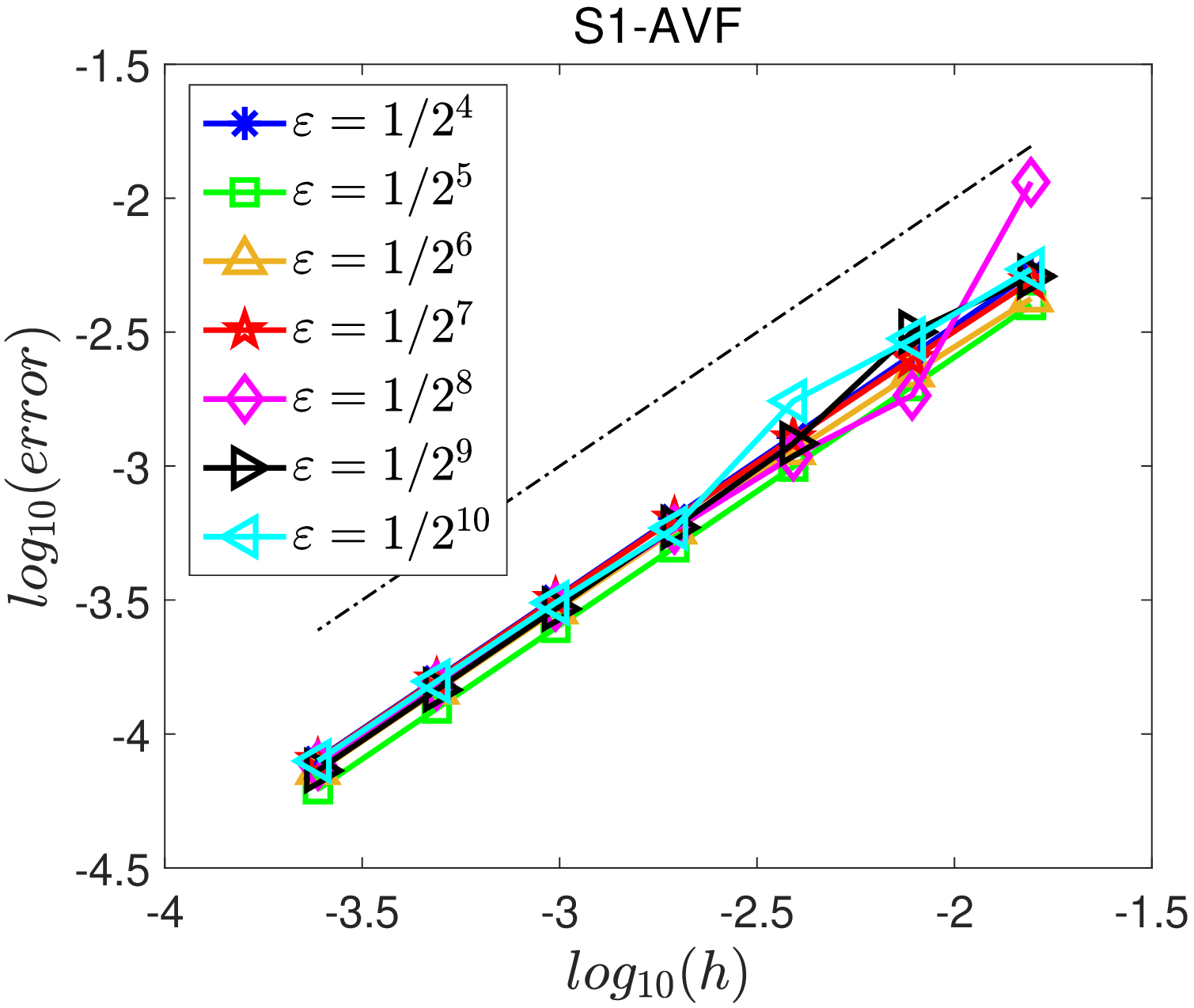,height=5.0cm,width=5cm}
\psfig{figure=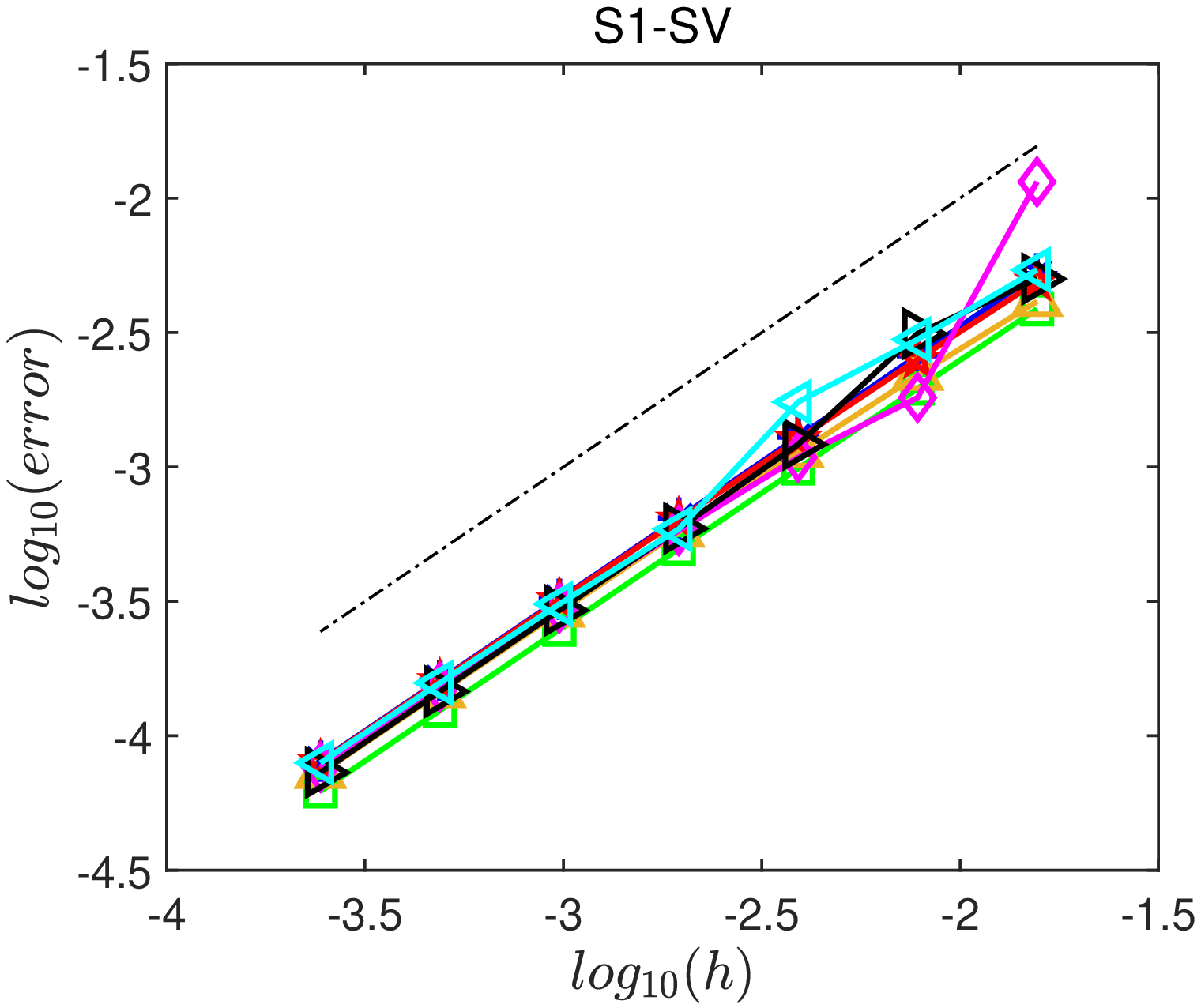,height=5cm,width=5cm}
\psfig{figure=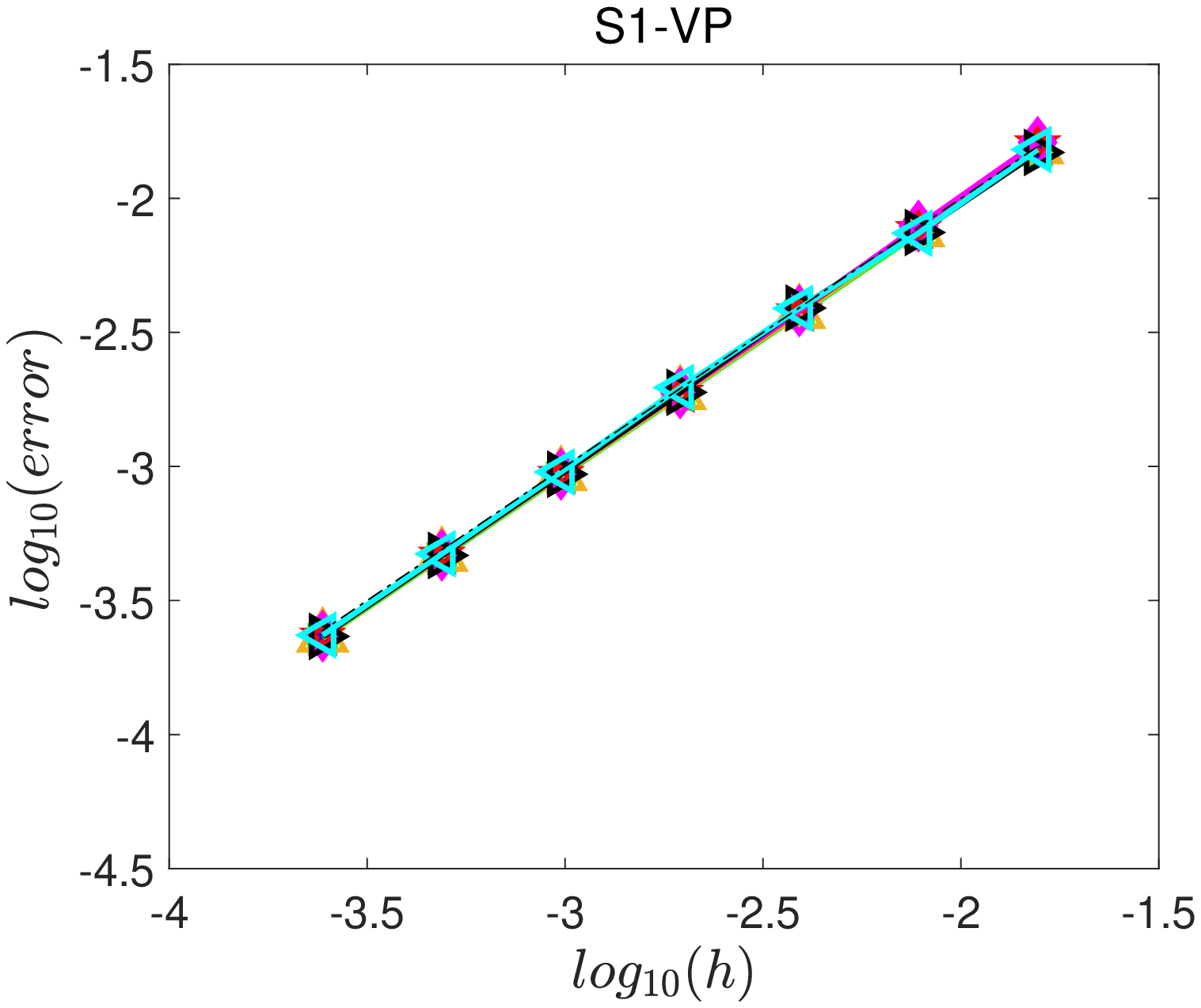,height=5cm,width=5cm}
\end{array}$$
\caption{ The error (\ref{err}) of the three splitting schemes in Problem 1 with step size $h=1/2^k$ for $k=6,\ldots,12$ under different $\eps$ (the dash-dot line is slope one).}\label{fig11}
\end{figure}

\begin{figure}[t!]
$$\begin{array}{cc}
\psfig{figure=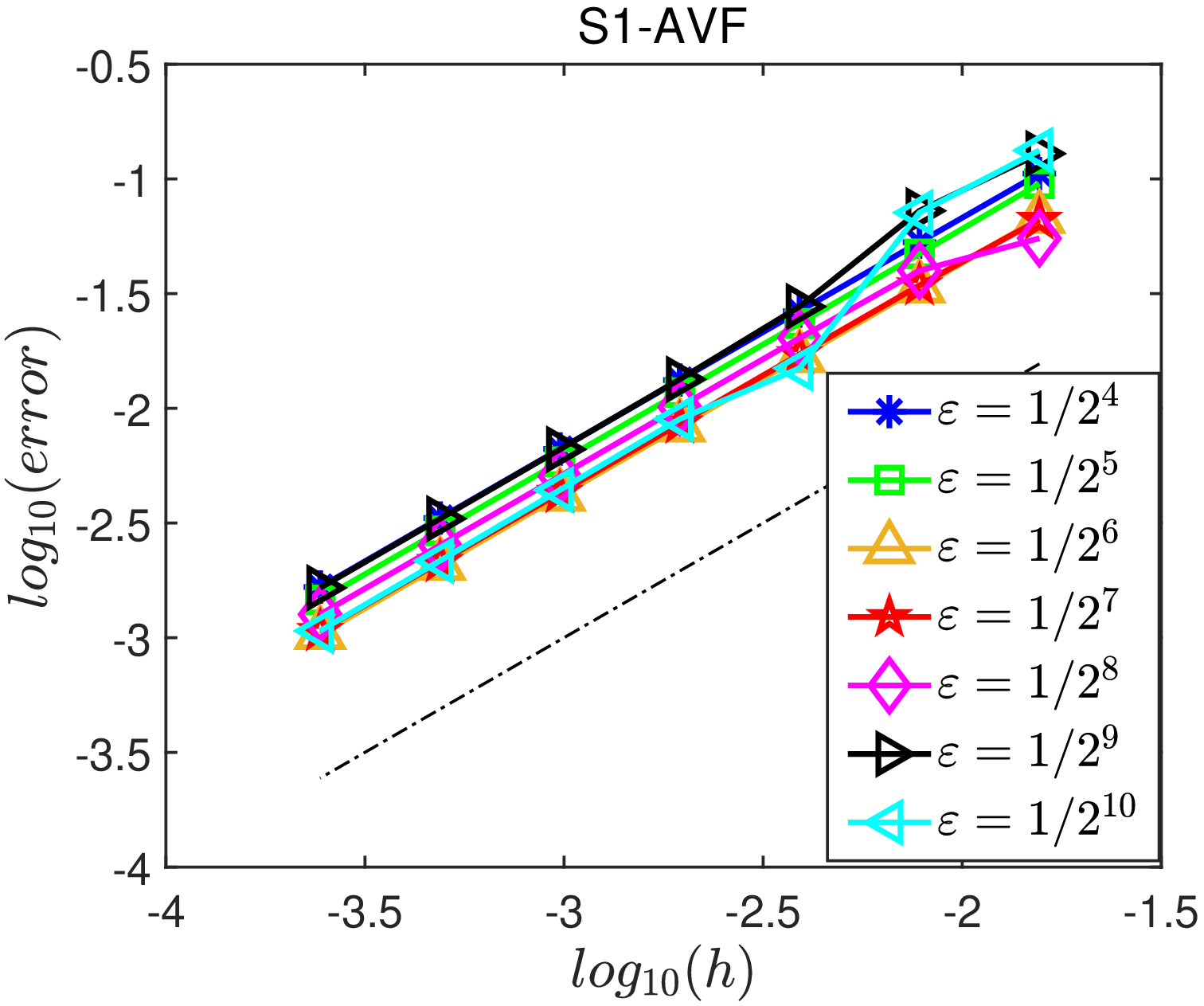,height=5.0cm,width=5cm}
\psfig{figure=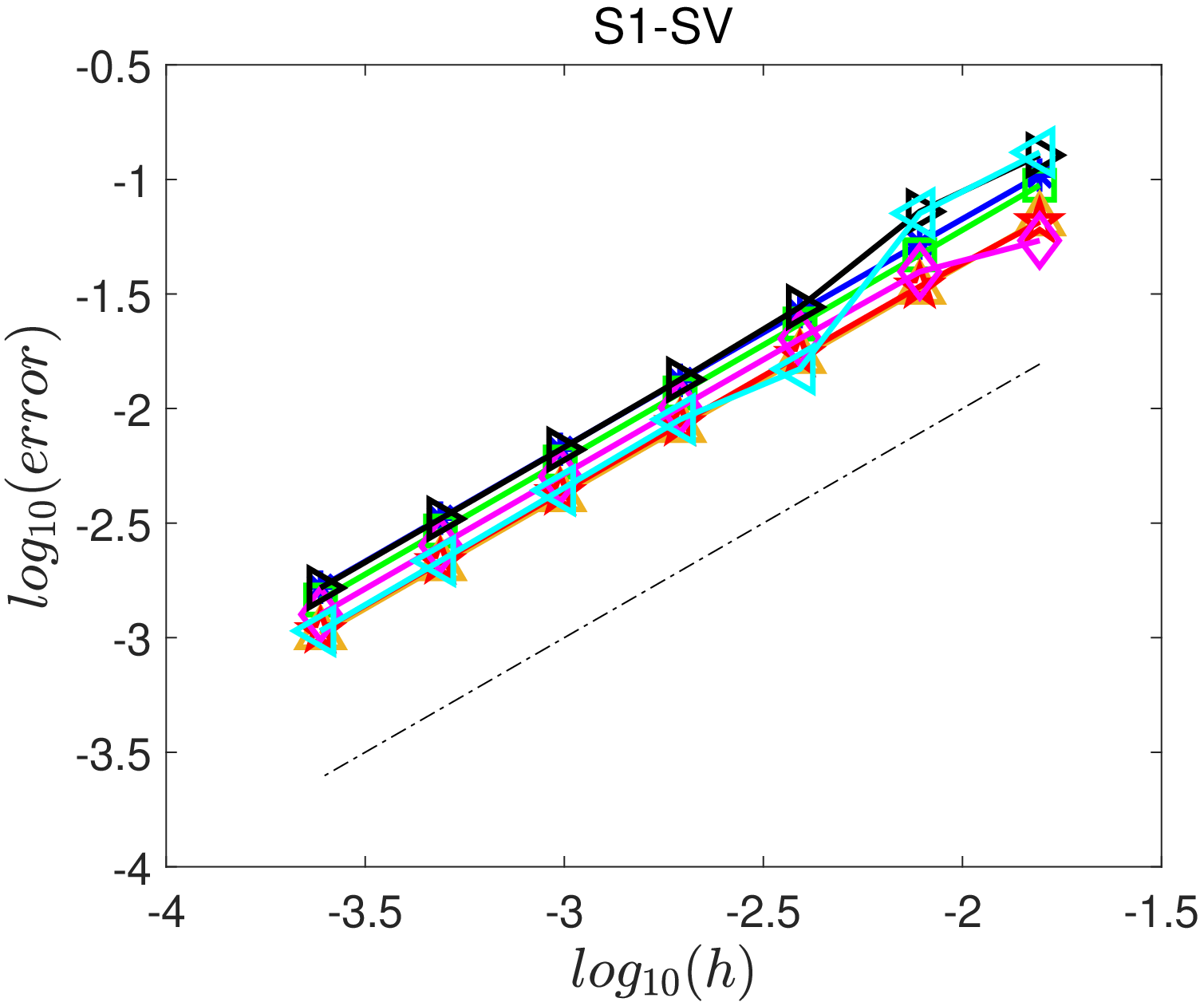,height=5cm,width=5cm}
\psfig{figure=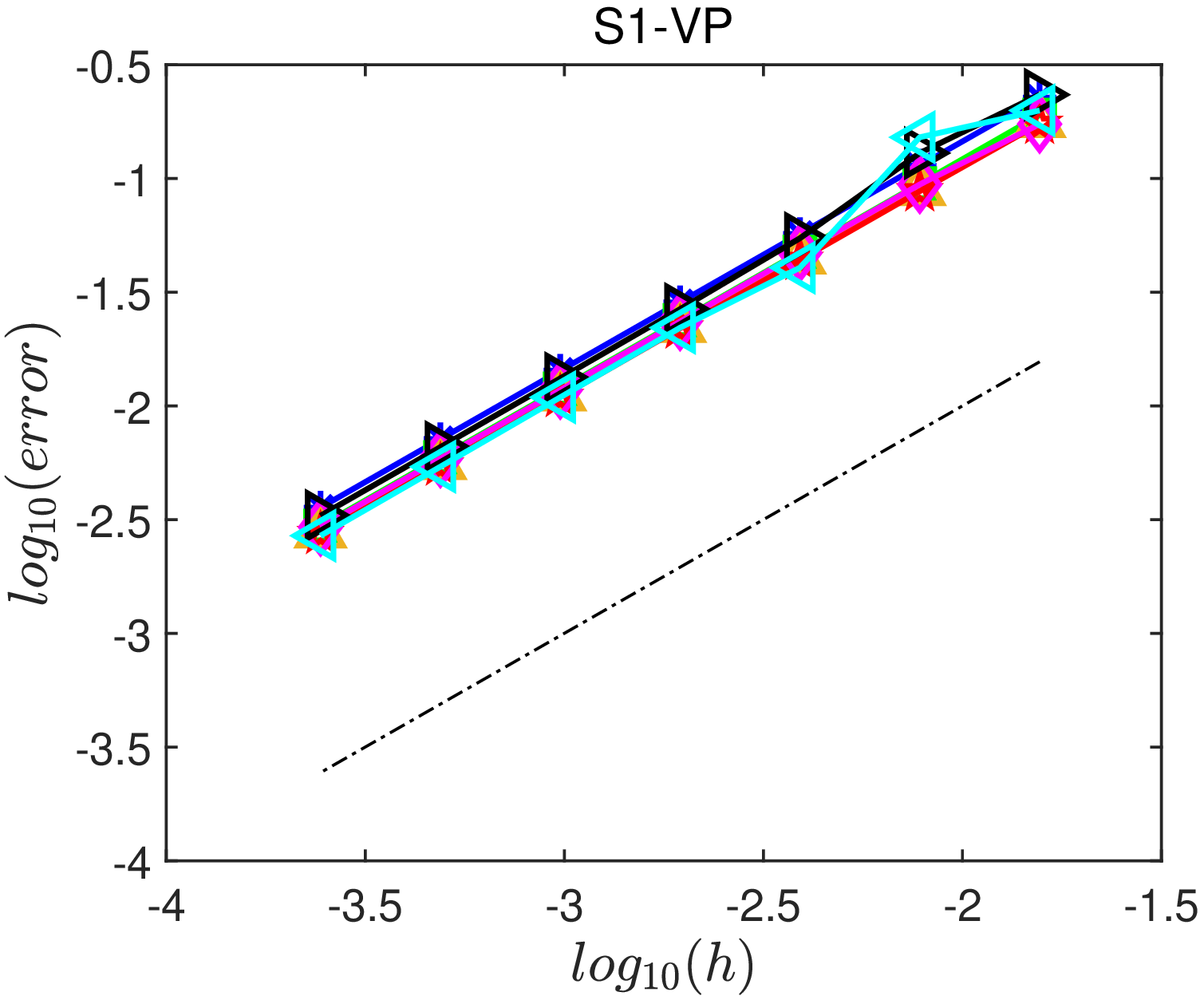,height=5cm,width=5cm}
\end{array}$$
\caption{The error (\ref{err}) of the three splitting schemes in Problem 2 with step size $h=1/2^k$ for $k=6,\ldots,12$ under different $\eps$ (the dash-dot line is slope one). }\label{fig21}
\end{figure}

\begin{figure}[t!]
$$\begin{array}{cc}
\psfig{figure=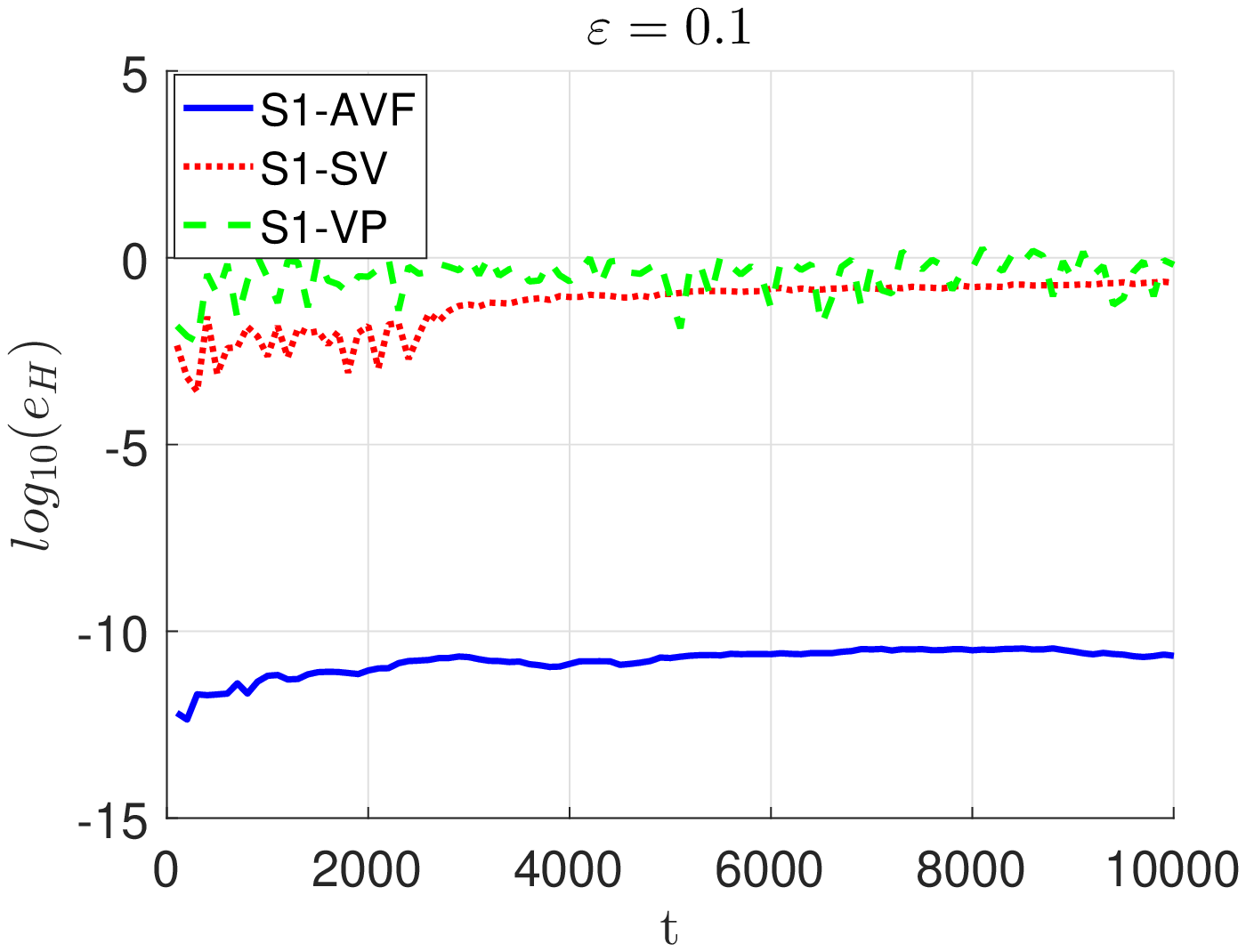,height=5.0cm,width=5cm}
\psfig{figure=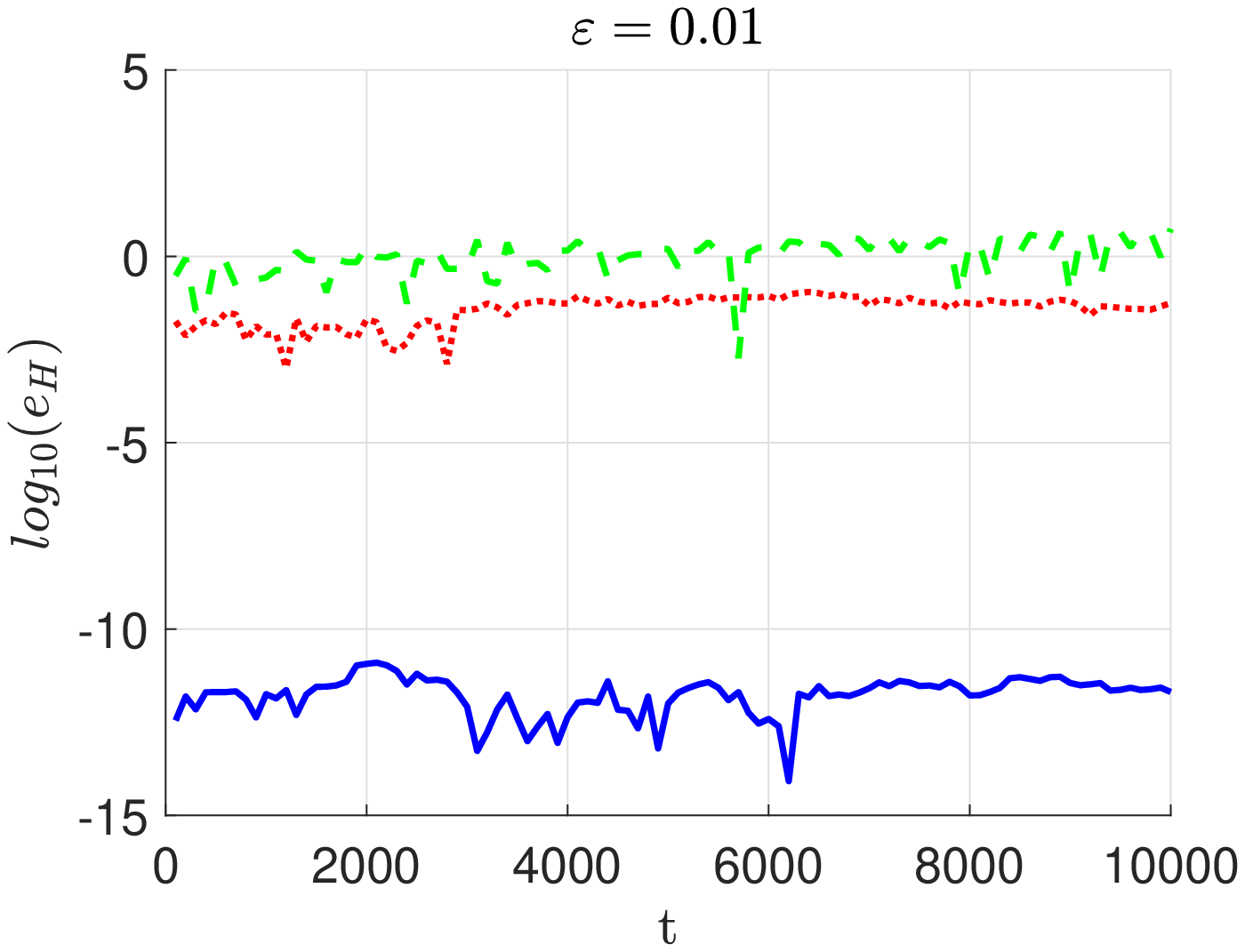,height=5cm,width=5cm}
\psfig{figure=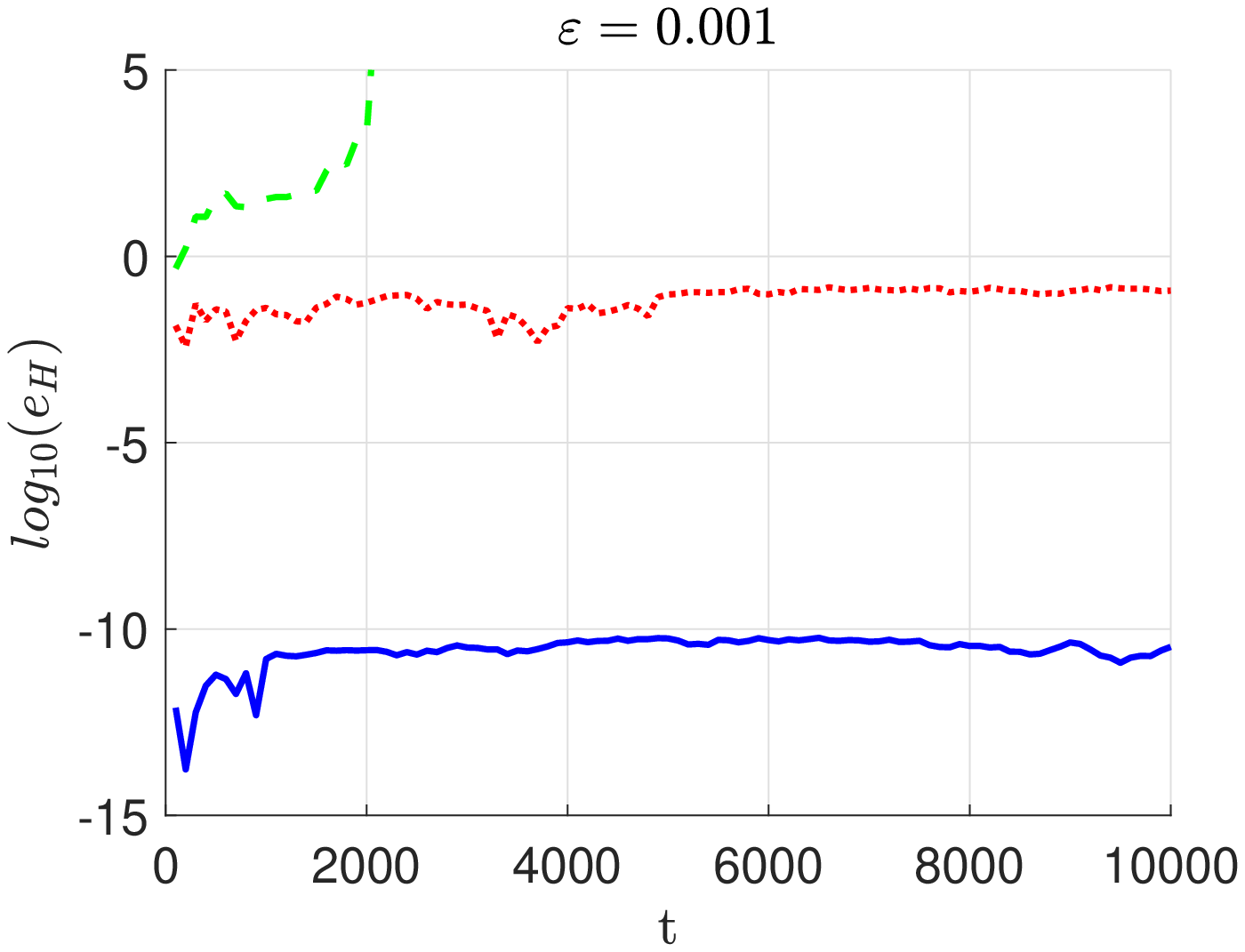,height=5cm,width=5cm}
\end{array}$$
\caption{Evolution of the energy error (\ref{eH}) as function of time $t$ under $h=0.01$ in Problem 2. }\label{fig22}
\end{figure}
\begin{figure}[h!]
$$\begin{array}{cc}
\psfig{figure=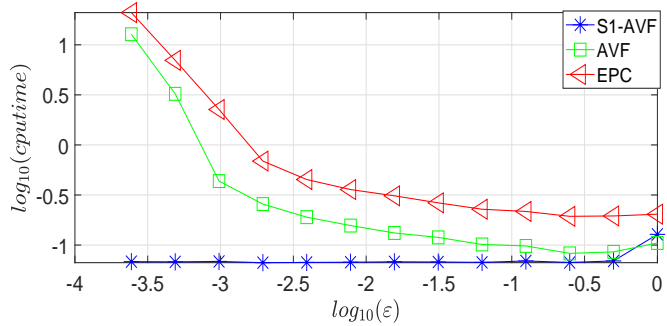,height=4.5cm,width=10cm}
\end{array}$$
\caption{Cputime of the energy-preserving schemes under the same number of iterations with $\eps=1/2^k$ for $k=0,\ldots,12$. }\label{fig23}
\end{figure}

\vskip2mm\noindent\textbf{Problem 2. (General strong magnetic field)} In the second numerical experiment, we consider  the charged-particle motion in
the general magnetic field  \cite{Hairer2018}
$$\frac{1}{\eps}B(x)=\nabla \times \frac{1}{4\eps}\big(x_{3}^2-x_{2}^2, x_{3}^2-x_{1}^2, x_{2}^2-x_{1}^2\big)^{\intercal}=\frac{1}{2\eps}(x_{2}-x_{3},x_{1}+x_{3},x_{2}-x_{1})^{\intercal},
 $$
and the electric field $E(x)=-\nabla_x U(x)$ with the potential
$U(x)=x_{1}^{3}-x_{2}^{3}+x_{1}^{4}/5 +x_{2}^{4}+x_{3}^{4}.$
 The initial values are given by  $x(0)=(0.6,1,-1)^{\intercal}$ and
  $v(0)=(-1,0.5,0.6)^{\intercal}$. Figures \ref{fig21} and \ref{fig22} respectively present the errors (\ref{err}) in the solution at $T=1$ and the errors in the energy
  \begin{equation}\label{eH}
e_H:=\frac{\abs{H(x^n,v^n)-H(x^0,v^0)}}{\abs{H(x^0,v^0)}},
\end{equation}
on a long time interval.

  To illustrate the efficiency of  the proposed S1-AVF, we choose two popular energy-preserving methods from the literature: the direct AVF method \eqref{AVF} for CPD (\ref{charged-particle sts}) and the fourth order energy-preserving collocation method (denoted by EPC) \cite{Hairer000} for comparison. We use the same standard fixed point iteration for all the schemes and set $10^{-16}$ as the error tolerance. 
   The system (\ref{charged-particle sts}) is integrated till $T=1$ by each of the method under the same step size $h=1/2^{10}$, and the computational time (cputime) is displayed in Figure \ref{fig23}. This test is conducted in a sequential program in MATLAB on a laptop
   ThinkPad X1 Carbon (CPU: Intel (R) Core (TM) i5-5200U CPU @ 2.20 GHz, Memory: 8 GB, Os: Microsoft Windows 7 with 64bit).

Based on the numerical results in Figures \ref{fig22}\&\ref{fig23}, we can draw the following observations.

1) Under the general strong magnetic field, the three presented splitting schemes  (\ref{TSM0}), (\ref{TSM1}) and (\ref{vps1}) still show the uniform first order error bound $\bigo(h)$ in $x$ and $v_{\parallel}$. Their performances are very similarly as in the maximal order case. This would require a more delicate analysis which is going to be our future work.

2) S1-AVF preserves the energy (\ref{H(x,v)}) to machine accuracy over long times. Between the other two methods, S1-SV has smaller energy error and better long-time behaviour than S1-VP. In comparison with other classical energy preserving methods, the computational cost of S1-AVF is uniform for $\eps\in (0,1]$. Hence, it is more efficient for CPD (\ref{charged-particle sts}) in the strong magnetic field regime.



\section{Conclusion}\label{sec:con}
In this paper, we considered the numerical solution of the charged-particle dynamics that involve a small parameter $\eps\in(0,1]$ inversely proportional to the  strength of the external magnetic field. Firstly, a novel splitting scheme that preserves the exact energy of the system was proposed, and its computational cost per step is uniform in $\eps\in(0,1]$. Then under the maximal ordering scaling, by using averaging technique, we established a uniform and optimal first order error bound for the proposed method in the position variable and the parallel part of the velocity variable to the magnetic field. For the general strong magnetic field case, we applied the modulated Fourier expansion for the error analysis of the proposed scheme, and a convergence result in $\eps$ was obtained. Our results  in fact are true for a class of Lie-Trotter type splitting schemes. Numerical experiments were conducted to illustrate the accuracy and efficiency of the scheme.
\section*{Acknowledgements}
 We would like to thank Christian Lubich for valuable
comments and suggestions on the work. X. Zhao is partially supported by the Natural Science Foundation of Hubei Province No. 2019CFA007 and the NSFC 11901440.


\begin{thebibliography}{99}
\bibitem {Arnold97} {\sc V.I. Arnold, V.V. Kozlov, A.I. Neishtadt}, Mathematical
Aspects of Classical and Celestial Mechanics, Springer, Berlin, 1997.

 \bibitem {Benettin94}{\sc G. Benettin, P. Sempio}, Adiabatic invariants and trapping
of a point charge in a strong nonuniform magnetic field,
Nonlinearity 7 (1994), pp. 281-304.

\bibitem {Boris1970} {\sc J.P. Boris}, Relativistic plasma simulation-optimization of a hybrid
code, Proceeding of Fourth Conference on Numerical Simulations of
Plasmas (1970), pp. 3-67.

 \bibitem{L. Brugnano2019} {\sc L. Brugnano, J.I. Montijano, L. R\'{a}ndz}, High-order energy-conserving line integral methods for charged particle dynamics, J. Comput. Phys. 396 (2019), pp. 209-227.

\bibitem{scaling1}
{\sc A.J. Brizard, T.S. Hahm}, Foundations of nonlinear gyrokinetic Theory, Rev. Modern Phys. 79
(2007), pp. 421-468.

\bibitem {Cary2009} {\sc J.R. Cary, A.J. Brizard}, Hamiltonian theory of guiding-center
motion, Rev. Modern Phys. 81  (2009), pp. 693-738.

\bibitem{Chartier} {\sc Ph. Chartier, F. M\'{e}hats, M. Thalhammer, Y. Zhang},
Improved error estimates for splitting methods applied to highly-oscillatory nonlinear Schr\"{o}dinger equations, Math. Comp. 85 (2016), pp. 2863-2885.

\bibitem{VP1}{\sc Ph. Chartier, N. Crouseilles, M. Lemou, F. M\'ehats, X. Zhao}, Uniformly accurate methods for Vlasov equations with non-homogeneous strong magnetic field, Math. Comp. 88 (2019), pp. 2697-2736.

\bibitem{VP2} {\sc Ph. Chartier, N. Crouseilles, X. Zhao}, Numerical methods for the two-dimensional Vlasov-Poisson equation in the finite Larmor radius approximation regime, J. Comput. Phys. 375 (2018),  pp. 619-640.

\bibitem{CPC} {\sc N. Crouseilles, S.A. Hirstoaga, X. Zhao}, Multiscale Particle-In-Cell methods
and comparisons for the long-time two-dimensional Vlasov-Poisson equation with
strong magnetic field, Comput. Phys. Comm. 222 (2018), pp. 136–151.

\bibitem{Zhao}
{\sc Ph. Chartier, N. Crouseilles, M. Lemou, F. M\'ehats, X. Zhao},
Uniformly accurate methods for three dimensional Vlasov equations under strong magnetic field with varying direction, SIAM J. Sci. Compt.  42 (2020), pp. B520-B547.

\bibitem{VP3} {\sc N. Crouseilles, M. Lemou, F. M\'ehats, X. Zhao}, Uniformly accurate Particle-in-Cell method for the long time two-dimensional Vlasov-Poisson equation with uniform strong magnetic field, J. Comput. Phys. 346 (2017), pp. 172-190.

\bibitem{VP4} {\sc F. Filbet, M. Rodrigues}, Asymptotically stable particle-in-cell methods for the Vlasov-Poisson system with a strong external magnetic field, SIAM J. Numer. Anal. 54 (2016), pp. 1120-1146.

\bibitem{VP5}   {\sc F. Filbet, M. Rodrigues}, Asymptotically preserving particle-in-cell methods for inhomogeneous strongly magnetized plasmas, SIAM J. Numer. Anal. 55 (2017), pp. 2416-2443.

\bibitem{VP9}{\sc F. Filbet, M. Rodrigues, H. Zakerzadeh},  Convergence analysis of asymptotic preserving schemes for strongly magnetized plasmas, arXiv:2003.08104v1 [math.NA].
    
\bibitem{VP-filbet} {\sc F. Filbet, T. Xiong, E. Sonnendr\"{u}cker}, On the Vlasov-Maxwell system with a strong magnetic field, SIAM J. Applied Mathematics 78 (2018), pp. 1030-1055.

 \bibitem{VP6}   {\sc  E. Fr\'enod, F. Salvarani and E. Sonnendr\"ucker}, Long time simulation of a beam in a periodic focusing channel via a two-scale PIC-method, Math. Models Methods Appl. Sci. 19 (2009), pp. 175-197.

 \bibitem{VP8}
{\sc E. Fr\'{e}nod, S. Hirstoaga, M. Lutz, E. Sonnendr\"{u}cker}, Long time behavior of an exponential integrator for a Vlasov-Poisson system with strong magnetic field, Commun. in Comput. Phys. 18 (2015), pp. 263-296.

\bibitem{ICM}{\sc L. Gauckler, E. Hairer, Ch. Lubich}, Dynamics, numerical analysis, and some geometry, Proc. Int. Cong. Math. 1 (2018), pp. 453-486.

\bibitem{Hairer000}{\sc E. Hairer}, Energy-preserving variant of collocation methods, JNAIAM J. Numer. Anal. Ind. Appl. Math. 5 (2010), pp. 73-84.


\bibitem{Hairer00}{\sc E. Hairer, Ch. Lubich}, Long-time energy conservation of
numerical methods for oscillatory differential equations, SIAM J.
Numer. Anal. 38 (2000), pp. 414-441.

\bibitem{Hairer16}{\sc E. Hairer, Ch. Lubich}, Long-term analysis of the
St\"{o}rmer-Verlet method for Hamiltonian systems with a
solution-dependent high frequency, Numer. Math. 134 (2016), pp. 119-138.

\bibitem {Hairer2017-1}{\sc E. Hairer, Ch. Lubich}, Energy behaviour of the Boris method for
charged-particle dynamics,  BIT 58  (2018),  pp. 969-979.

\bibitem {Hairer2017-2}{\sc E. Hairer, Ch. Lubich}, Symmetric multistep methods for
charged-particle dynamics,  SMAI J. Comput. Math. 3 (2017), pp. 205-218.

\bibitem {Hairer2018}{\sc E. Hairer, Ch. Lubich}, Long-term analysis of a variational integrator for
charged-particle dynamics in a strong magnetic field,
 Numer. Math. 144 (2020), pp. 699-728.

\bibitem {hairer2006} {\sc E. Hairer, Ch. Lubich, G. Wanner}, Geometric Numerical
Integration: Structure-Preserving Algorithms for Ordinary
Differential Equations, 2nd edn.  Springer-Verlag, Berlin,
Heidelberg, 2006.

  \bibitem{lubich19}{\sc E. Hairer, Ch. Lubich,  B. Wang}, A filtered Boris algorithm for
charged-particle dynamics in a strong magnetic field,
Numer. Math. 144 (2020), pp. 787-809.

\bibitem {He2015}{\sc Y. He,   Y. Sun, J. Liu, H. Qin}, Volume-preserving algorithms for charged particle dynamics,
J. Comput. Phys. 281 (2015), pp. 135-147.

\bibitem {He2017}{\sc Y. He, Z. Zhou,
Y. Sun, J. Liu, H. Qin}, Explicit K-symplectic algorithms for charged
particle dynamics, Phys. Lett. A 381 (2017), pp. 568-573.


\bibitem {Ostermann15}{\sc C. Knapp, A. Kendl,  A. Koskela, A. Ostermann}, Splitting methods for time integration of trajectories in combined electric and magnetic fields, Phys. Rev. E  92 (2015), pp. 063310.

\bibitem {VP7} {\sc M. Kraus, K. Kormann, P. Morrison, E. Sonnendr\"ucker}, GEMPIC: geometric electromagnetic Particle In Cell methods, Journal of Plasma Physics 4 (2017), pp. 83.

\bibitem {add1}
{\sc W.W. Lee}, Gyrokinetic approach in particle simulation, Phys. Fluids 26 (1983).

\bibitem{Li-ANM}{\sc T. Li, B. Wang}, Efficient energy-preserving methods for charged-particle dynamics, Appl. Math. Comput. 361 (2019), pp. 703-714.

\bibitem{Li-AML}{\sc T. Li, B. Wang}, Arbitrary-order energy-preserving methods for charged-particle dynamics, Appl. Math. Lett. 100 (2020), pp. 106050.


  \bibitem{Splitting}
{\sc R.I. McLachlan, G.R.W. Quispel}, Splitting methods, Acta Numer. 11 (2002), pp. 341-434.

\bibitem{AVF} {\sc R.I. McLachlan, G.R.W. Quispel, N. Robidoux}, Geometric integration using discrete gradients, Philos. Trans. R. Soc. A 357 (1999), pp. 1021-1046.


\bibitem {Northrop63} {\sc T.G. Northrop}, The adiabatic motion of charged particles.
Interscience Tracts on Physics and Astronomy, Vol. 21. Interscience
Publishers John Wiley  and Sons New York-London-Sydney, 1963.

\bibitem{scaling2}
{\sc S. Possanner}, Gyrokinetics from variational averaging: existence and error bounds, J. Math. Phys.
59 (2018), pp. 082702.

\bibitem{PRL1}
{\sc H. Qin, X. Guan}, Variational symplectic integrator for long-time simulations of the guiding-center motion of charged particles in general magnetic fields, Phys. Rev. Lett. 100 (2008), pp. 035006.

\bibitem {Qin2013}{\sc H. Qin, S. Zhang, J. Xiao, J. Liu, Y. Sun, W. Tang}, Why is
Boris algorithm so good?, Phys. Plasmas 20  (2013), pp. 084503.

\bibitem{Quispel2008} {\sc G.R.W. Quispel, D.I. McLaren}, A new class of energy-preserving
numerical integration methods, J. Phys. A: Math. Theor.  41 (2008),  pp. 045206.

\bibitem{SonnendruckerBook}
{\sc E. Sonnendr\"{u}cker}, {Numerical Methods for Vlasov Equations}, Lecture notes, 2016.

 \bibitem {Tao2016}{\sc M. Tao}, Explicit high-order symplectic
integrators for charged particles in general electromagnetic fields,
J. Comput. Phys.  327 (2016), pp. 245-251.

\bibitem{Wang2020}{\sc  B. Wang}, Exponential energy-preserving methods for charged-particle dynamics in a  strong and constant magnetic field, to appear on J. Comput. Appl. Math. (2020).

 \bibitem {Webb2014}{\sc S.D. Webb}, Symplectic integration of
magnetic systems, J. Comput. Phys. 270 (2014), pp. 570-576.

\bibitem {Zhang2016} {\sc R. Zhang, H.
Qin, Y. Tang, J. Liu, Y. He,  J. Xiao}, Explicit symplectic
algorithms based on generating functions for charged particle
dynamics, Phys. Rev. E 94 (2016), pp. 013205.


\end{thebibliography}
\end{document}